\documentclass[10pt]{amsart}
\usepackage{amsmath,amssymb,amsthm}
\usepackage{graphicx}
\usepackage{color}
\newtheorem{theorem}{Theorem}[section]    
\newtheorem{lemma}[theorem]{Lemma} 
 
\newtheorem{proposition}[theorem]{Proposition}

\newtheorem{corollary}[theorem]{Corollary}
\theoremstyle{definition}
\newtheorem{definition}[theorem]{Definition}

\newtheorem{remark}[theorem]{Remark}
\newtheorem*{remark*}{Remark}
\newcommand{\Z}{\mathbb{Z}}

 \makeatletter
    
    \@addtoreset{equation}{section}
  \makeatother

\title[A clasp number two genus two fibered knot]{A partial classification of 3-dimensional clasp number two, genus two fibered knots}
\author{Tetsuya Ito}
\address{Department of Mathematics, Kyoto University, Kyoto 606-8502, JAPAN}
\email{tetitoh@math.kyoto-u.ac.jp}

\begin{document}

\begin{abstract}
The 3-dimensional clasp number $cl(K)$ of a knot $K$ is the minimum number of clasp singularities of clasp disk, a singular immersed disk bounding $K$ whose singular set consists of only clasp singularities. We give a classification of clasp number two, genus two fibered knots under the assumption that they admit a clasp disk of certain type which we call of type II.
\end{abstract}

\maketitle

\section{Introduction}

Every knot $K$ in $S^{3}$ bounds a clasp disk $D$, a singular immersed disk whose singular set consists of only clasp singularities. The \emph{3-dimensional clasp number}\footnote{There is a notion of 4-dimensional clasp number. In this paper we only treat 3-dimensional clasp number.} $cl(K)$ of a knot $K$, which we simply call the \emph{clasp number}, is the minimum of $cl(D)$ for a clasp disk $D$ of $K$, where $cl(D)$ is the number of clasp singularities of $D$\footnote{Clasp number is often denoted by $c(K)$, but to avoid confusion with crossing number we adopt to use $cl(K)$.}. 
The sign of a clasp singularity is defined as the sign of intersection of disk and $K$ at the clasp (see Figure \ref{fig:clasp} (a)).

\begin{figure}[htbp]
\begin{center}
\includegraphics*[width=100mm]{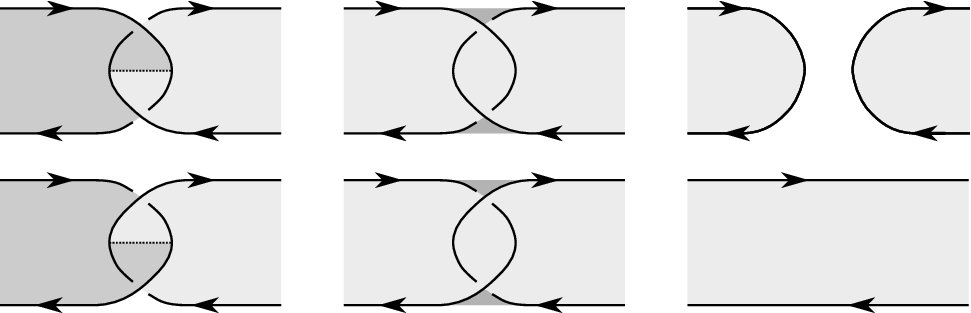}
\begin{picture}(0,0)
\put(-305,90){(a)}
\put(-205,90){(b)}
\put(-100,90){(c)}
\put(-100,40){(d)}
\end{picture}
\caption{(a) positive and negative clasp singularity. (b) The Seifert surface from clasp disk. (c) resolution of clasp singularity. (d) smoothing of clasp singularity} 
\label{fig:clasp}
\end{center}
\end{figure}

Since one clasp singularity is resolved by a single crossing change (see Figure \ref{fig:clasp} (c)), it follows that $u(K) \leq cl(K)$, where $u(K)$ is the unknotting number of $K$. 

On the other hand, a clasp disk $D$ can be converted to a Seifert surface $S_D$ by replacing a neighborhood of the clasp as in Figure \ref{fig:clasp} (b). More precisely, the Seifert surface $S_D$ is obtained by the surface $S_{D}^{0}$ by plumbing a positive or negative Hopf band, where $S_D^{0}$ is a surface obtained by replacing the clasp with untwisted band as in as in Figure \ref{fig:clasp} (d).
We call such an operation that replaces a clasp with a band the \emph{smoothing} of a clasp singularity. In particular, it follows that $g(K) \leq cl(K)$, where $g(K)$ is the genus of the knot $K$.

It is easy to see that a knot with clasp number one is exactly the doubled knots. 
Since a knot $K$ is a doubled knot if and only if $g(K)=u(K)=1$ \cite{ko,scto}, it follows that $cl(K)=1$ if and only if $g(K)=u(K)=1$. 

The situation differs when the clasp number is greater than one. In \cite{kk} Kadokami and Kawamura showed that the Conway polynomial of clasp number two knots has some special expression. As a consequence, they showed that a clasp number two knot cannot have the Conway polynomial $\nabla_K(z)$ of the form $a_4(K)z^4+a_2(K)z^2+1$ with $a_4(K) \equiv 3 \pmod 8$ and $a_2(K) \equiv 2 \pmod 4$. Here $a_2(K)$ and $a_4(K)$ denote the coefficients of $z^{2}$ and $z^{4}$ of the Conway polynomial $\nabla_K(z)$, respectively. Using this constraint, they gave infinitely many knots $\{K_n\}$ that satisfy $cl(K_n) > 2 \geq \max\{g(K_n),u(K_n)\}$. 

According to the positions of two clasps, a clasp number two clasp disk is naturally classified into two types which we call \emph{of type X} and \emph{of type II}. We say that a clasp disk is of \emph{type X} (resp. of \emph{type II}) if the  smoothing of the clasps as in Figure \ref{fig:clasp} (d) gives a genus one surface with single boundary (resp. genus zero surface with three boundaries). See Definition \ref{definition:type} for details.

The aim of this note to determine \emph{fibered} genus two knots with clasp number  two, under the assumption that they admit a clasp disk of type II.

\begin{theorem}[Classification of genus two, clasp number two fibered knots of type II]
\label{theorem:main}

Let $K$ be a genus two fibered knot with $cl(K)=2$. If $K$ admits a clasp disk of type II, then up to taking mirror image, $K$ is one the following knot.
\begin{itemize}
\item[(i)] (Non-prime knots) $3_1\# 3_1, 3_1 \# 4_1, 4_1 \# 4_1, 3_1 \# \overline{3_1}$.
\item[(ii)] (Two-bridge knots) $6_2,6_3,7_6,7_7$.
\item[(iii)] (Montesions knots) The Montesions knots
\begin{align*}
&K\left(\frac{1}{2}, -\frac{2}{3},\frac{2}{4n \pm 1}\right),
K\left(\frac{1}{2},-\frac{2}{5},\frac{2}{4n \pm 1}\right),
K\left(\frac{1}{2n}, \frac{2}{3},-\frac{2}{3}\right),\\
&K\left(\frac{1}{2n}, \frac{2}{3},-\frac{2}{5}\right),
K\left(\frac{1}{2n}, \frac{2}{5},-\frac{2}{3}\right),
K\left(\frac{1}{2n}, \frac{2}{5},-\frac{2}{5}\right) \quad (n \in \Z)
\end{align*}
\item[(iv)] (Exceptional knots) The $12$ knots $K^{\sf ex}_{i;\varepsilon_1,\varepsilon_2}$ $(i=1,2,3, \varepsilon_1,\varepsilon_2 \in \{\pm 1\})$ in Figure \ref{fig:exceptional}.
\end{itemize}
\end{theorem}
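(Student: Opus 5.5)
Let $D$ be a clasp disk of type II for $K$ with two clasps $c_1,c_2$. The plan is to convert $D$ into a genus two Seifert surface and use fiberedness to pin down its shape. Smoothing both clasps as in Figure \ref{fig:clasp} (d) gives a planar surface $S_D^0$ with three boundary components. Then $S_D$ is obtained from $S_D^0$ by plumbing two Hopf bands $H_{\varepsilon_1},H_{\varepsilon_2}$, where $\varepsilon_i$ is the sign of $c_i$. Since $g(S_D)=2=g(K)$, $S_D$ is a minimal genus Seifert surface. For a fibered knot the minimal genus Seifert surface is unique up to isotopy and is the fiber, so $S_D$ is the fiber surface. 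By Gabai's theorem that Murasugi sums (in particular Hopf plumbings) are fibered iff the summands are, deplumbing the Hopf bands shows that $S_D^0$ must itself be a fiber surface of the three-component link $L=\partial S_D^0$.

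So the first step is to describe $S_D^0$ concretely. In type II, $S_D^0$ is a disk with two untwisted bands whose cores are unknotted arcs, giving a pair of pants. Its boundary is a link $L=L_0\cup L_1\cup L_2$ in which $L_1,L_2$ bound disjoint disks $\Delta_1,\Delta_2$ (the ends of the clasps). The surface is a $\Theta$-curve neighborhood. I would encode the data as a spatial $\Theta$-graph or as a pair of tangles. Fiberedness of a planar surface with three boundary components is strong: its monodromy is a homeomorphism of a pair of pants fixing boundary, hence a product of boundary Dehn twists. From this I expect to show that $S_D^0$ must be a Murasugi sum, or plumbing, of two Hopf-type annuli $A_1,A_2$. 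Each $A_i$ must be a fibered annulus, hence a Hopf annulus, and which plumbing configurations are allowed is then determined.

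Next comes the case analysis. The remaining freedom is the twisting and linking of the two annuli relative to where the Hopf bands of the clasps are plumbed, together with the signs $\varepsilon_1,\varepsilon_2$. I would split into cases:
\begin{itemize}
\item[(a)] The two clasps are separated by a sphere. This yields the connected sums of fibered genus one knots, namely $3_1$, $\overline{3_1}$ and $4_1$, giving list (i).
\item[(b)] The fiber is a linear plumbing of four Hopf bands. This produces the two-bridge knots of list (ii).
\item[(c)] One band carries a free integer parameter, twisting $2n$ or $4n\pm1$. These are the Montesinos families of list (iii).
\item[(d)] The finitely many configurations not covered above give the exceptional knots of list (iv).
\end{itemize}
In each case I would draw $S_D$ and identify $K$ by standard moves.

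The main obstacle is the middle step: classifying the embeddings of the pair of pants $S_D^0$ that are fiber surfaces while remaining compatible with the clasp structure. The difficulty is that the arcs on $S_D^0$ where the Hopf bands attach must lie on the ends $\Delta_1,\Delta_2$. I would try to handle this using Gabai's sutured manifold decomposition. Decomposing the complement of $S_D^0$ along product disks should show that it is a product if and only if the two boundary-parallel bands reduce to a Hopf band or an unknotted twisted band. That reduction would force the explicit parametrized forms, and a final check of which parameters give genuinely genus two knots, excluding for instance $n=0$ degenerations, completes the list.
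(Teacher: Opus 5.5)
Your first step agrees with the paper. Since $g(S_D)=2=g(K)$, $S_D$ is the fiber; you even make explicit the uniqueness-of-fiber point the paper uses tacitly. By Stallings--Gabai, the planar surface $S_D^0=S_{o,o}$ is then a fiber surface of the $3$-component link $K_{o,o}$.

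\textbf{The gap is your middle step, and it is false, not just unproved.} Suppose $S_D^0$ were a Murasugi sum of two Hopf annuli. Its monodromy would then be $T_{c_1}^{\pm1}T_{c_2}^{\pm1}$, with $c_1,c_2$ essential curves in the pair of pants, hence boundary-parallel. The only fibered links in $S^3$ of this form are $H_\pm\# H_\pm$. But two other families are genus zero fibered links not of this form:
\begin{itemize}
\item $P(2,2n,-2)$ with $n\neq 0$, whose monodromy is $T_1T_2^{-1}T_3^{-n}$;
\item $L^{\sf ex}$, whose monodromy is $T_1^{-1}T_2^{2}T_3^{3}$.
\end{itemize}
These are exactly what produce lists (iii) and (iv). So if your reduction went through, it would give only (i) and (ii), contradicting the statement you are proving. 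Your cases (a)--(d) are read off from the answer rather than derived. The sutured-manifold sketch also gives no mechanism that would ever produce $L^{\sf ex}$.

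\textbf{First missing ingredient: a classification of genus zero, $3$-component fibered links.} The paper gets it from a fact you mention but do not exploit. $MCG(S)$ of the pants is $\Z^3$, generated by the boundary twists, so $\phi=T_1^aT_2^bT_3^c$. The closed manifold of the open book $(S,\phi)$ has $\pi_1=\langle x,y,z\mid z^a=x^b=y^c,\ xyz=1\rangle$, which surjects onto the triangle group $\Delta_{a,b,c}$. Requiring $\pi_1$ to be trivial forces, for $|a|\le|b|\le|c|$, $(a,b,c)$ to be $(0,\pm1,\pm1)$, $(\pm1,\mp1,n)$ or $\pm(-1,2,3)$. These are realized as follows:
\begin{itemize}
\item $(0,\pm1,\pm1)$ by $H_\pm\#H_\pm$;
\item $(\pm1,\mp1,n)$ by $1/n$-surgery (iterated Stallings twist) on $H_+\#H_-$, giving the family $P(2,2n,-2)$;
\item $\pm(-1,2,3)$ by a Stallings twist on $P(2,-6,-2)$, giving $L^{\sf ex}$.
\end{itemize}

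\textbf{Second missing ingredient: finiteness of the plumbing data.} Each attaching arc must join two different components of $K_{o,o}$, since otherwise the boundary is not a knot. A pair of pants has only three such arcs up to isotopy, and the two disjoint arcs cannot be parallel. Hence each link gives at most $2^2\binom{3}{2}=12$ knots. For $H_+\#H_+$ and $P(2,2n,-2)$ these are identified via rational tangles: plumbing a Hopf band onto a $\frac{1}{2m}$-twisted band gives $Q(\frac{2}{4m\pm1})$.

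Two of your worries are unnecessary. The position of the arcs relative to $\Delta_1,\Delta_2$ plays no role. No ``degenerate'' parameter values need to be discarded either, since every such plumbing is a genus two fiber.
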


\begin{figure}[htbp]
\begin{center}
\includegraphics*[width=130mm]{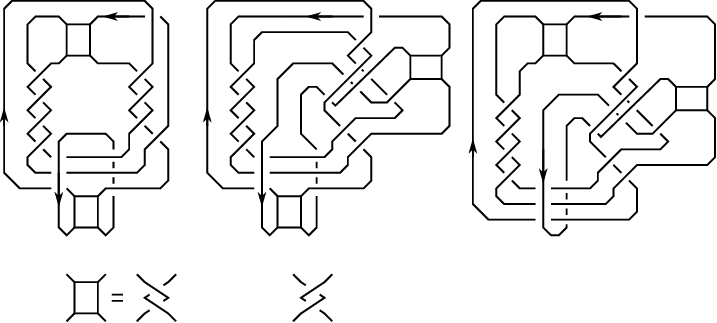}
\begin{picture}(0,0)
\put(-144,157){$\varepsilon_1$}
\put(35,147){\rotatebox{270}{$\varepsilon_1$}}
\put(172,130){\rotatebox{270}{$\varepsilon_1$}}
\put(-36,68){$\varepsilon_2$}
\put(-139,22){$\varepsilon$}
\put(-85,22){$(\varepsilon = +1)$}
\put(-38,10){$,$}
\put(3,22){$(\varepsilon = -1)$}
\put(-140,68){$\varepsilon_2$}
\put(103,157){$\varepsilon_2$}
\put(-110,60){\large $K^{\sf ex}_{1;\varepsilon_1,\varepsilon_2}$}
\put(0,60){\large $K^{\sf ex}_{2;\varepsilon_1,\varepsilon_2}$}
\put(160,60){\large $K^{\sf ex}_{3;\varepsilon_1,\varepsilon_2}$}
\end{picture}
\caption{12 exceptional knots $K^{\sf ex}_{i;\varepsilon_1,\varepsilon_2}$} 
\label{fig:exceptional}
\end{center}
\end{figure} 

For our terminologies of Montesions knots $K(r_1,r_2,r_3)$, see Appendix \ref{section:rational_tangle}.

Although the Theorem \ref{theorem:main} only treats the case $K$ admits a clasp disk of type II, we will observe several constraints for knots to admit a clasp disk of type X hence we are often able to detect knots with $cl(K)>2\geq \max\{g(K),u(K)\}$. 
 
\begin{corollary}
\label{cor:cl>2} 
The knots $K \in \{11n_{74}, 11n_{116}, 11n_{142},12n_{462},12n_{838}\}$ satisfy
$u(K)\leq g(K)=2 < 3 \leq cl(K)$. 
\end{corollary}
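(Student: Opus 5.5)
The plan is to reduce the inequality $cl(K)\geq 3$ to excluding $cl(K)=2$, and then to treat the two possible types of a clasp number two clasp disk separately.

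First, the easy half. For each $K\in\{11n_{74},11n_{116},11n_{142},12n_{462},12n_{838}\}$ we read off from the knot tables (KnotInfo) that $g(K)=2$ and $u(K)\leq 2$. We also check that $K$ is fibered, using the monic Alexander polynomial of degree $4$ together with the tabulated fiberedness data. Since $g(K)\leq cl(K)$, we get $cl(K)\geq 2$. It therefore suffices to show $cl(K)\neq 2$. Suppose instead that $cl(K)=2$, and take a clasp disk $D$ with $cl(D)=2$. By Definition \ref{definition:type}, $D$ is either of type II or of type X.

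\emph{Type II.} Since $K$ is a genus two fibered knot, Theorem \ref{theorem:main} applies, so $K$ or its mirror appears in the list (i)--(iv). We rule out each family as follows.
\begin{itemize}
\item Family (i): $K$ is prime, so it is not a connected sum.
\item Family (ii): the listed two-bridge knots have crossing number at most $7$, while $K$ has crossing number $11$ or $12$.
\item Family (iii): all these Montesinos knots $K(r_1,r_2,r_3)$ have a fixed pair of denominators among the $r_i$ and one free parameter $n$. We compute their Alexander (or Conway) polynomials as functions of $n$ and compare with $\Delta_K$. Where the polynomials happen to agree, we separate them by a finer invariant, such as the determinant together with the signature, or the HOMFLY polynomial. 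An alternative is to note that $K$ is not a Montesinos knot of length three, using its hyperbolic structure or the tabulated Conway notation.
\item Family (iv): the $12$ exceptional knots are finitely many, and we distinguish each of them from $K$ by the Alexander polynomial or, if needed, by the HOMFLY polynomial.
\end{itemize}

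\emph{Type X.} Here we use the constraints on type X clasp disks established in the body of the paper. Smoothing both clasps gives a genus one surface $S^0_D$ with one boundary component. The Seifert surface $S_D$ is obtained from $S^0_D$ by plumbing two Hopf bands of the signs of the clasps. Because $K$ is fibered of genus two, $S_D$ is the fiber, which forces strong restrictions:
\begin{itemize}
\item $S^0_D$ is a genus one fiber surface, so its boundary is a trefoil or the figure-eight knot.
\item Hence the Seifert matrix of $K$ has a specific block form, determined by a trefoil or figure-eight block together with the two Hopf band contributions and a small number of linking parameters.
\end{itemize}
From this block form we derive a list of possible Conway polynomials $\nabla_K(z)=a_4z^4+a_2z^2+1$, with arithmetic restrictions on $(a_2,a_4)$. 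These should be in the spirit of the Kadokami--Kawamura congruence $a_4\equiv 3 \pmod 8$, $a_2\equiv 2\pmod 4$ being excluded. We then check that the pair $(a_2(K),a_4(K))$ of each of the five knots violates these restrictions. If the Conway polynomial alone does not suffice, we would additionally use the signature, or the fact that the monodromy is a product of the genus one monodromy with two Dehn twists. This also constrains invariants such as the Alexander module or twisted polynomials.

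The main obstacle is the type X case. Theorem \ref{theorem:main} gives no classification there, so everything rests on extracting a sufficiently sharp invariant-level obstruction from the type X structure, and on verifying that each of the five specific knots fails it. The type II exclusion, by contrast, is a finite comparison of invariants against the explicit list.
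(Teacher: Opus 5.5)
\textbf{The type X case is not proved.} Your reduction matches the paper's. Since $cl(K)\ge g(K)=2$, you only need to exclude a two-clasp disk. Such a disk is of type II or type X, and type II is handled by Theorem~\ref{theorem:main} together with the fact that the five knots are not on its list. For type X, however, you never state the ``arithmetic restriction'' you plan to derive, let alone check it. The fallbacks (signature, Alexander module, twisted polynomials) are only a wish list, so $cl(K)\neq 2$ is not established.

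The missing step is a parity argument that comes straight from Theorem~\ref{theorem:Conway}; it is Corollary~\ref{cor:Conway-obstruction-typeX}, which is what the paper uses.
\begin{itemize}
\item For a type X disk, $a_4(K)=\varepsilon_1\varepsilon_2(\ell_1\ell_2-\ell(\ell+1))\equiv \ell_1\ell_2 \pmod 2$, since $\ell(\ell+1)$ is even.
\item A fibered genus two knot has $a_4(K)=\pm1$, so $\ell_1$ and $\ell_2$ are both odd.
\item Hence $a_2(K)=\varepsilon_1\ell_1+\varepsilon_2\ell_2+\varepsilon_1\varepsilon_2\equiv 1 \pmod 2$.
\end{itemize}
All five knots have even $a_2(K)$, so none of them admits a type X disk. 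Your Seifert-matrix plan would produce exactly this formula if carried out. You do not even need that the smoothed knot is a trefoil or figure-eight; only that $a_4(K)$ is odd. Looking for something ``in the spirit of'' the Kadokami--Kawamura congruence is a dead end here, because $a_4=\pm1$ is never $\equiv 3 \pmod 8$.

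\textbf{A caution on type II.} Here you are at least as explicit as the paper, which simply asserts that the five knots are not on the list. However, comparing Alexander polynomials as functions of $n$ does not finish family (iii) by itself.
\begin{itemize}
\item For $K(\frac{1}{2n},\frac23,-\frac23)$ and $K(\frac{1}{2n},\frac25,-\frac25)$, the two clasps have opposite signs. The $n$-dependence of $\ell_1,\ell_2$ then cancels in $\varepsilon_1\ell_1+\varepsilon_2\ell_2$, so their Conway polynomials are $1+2z^2+z^4$ and $1-2z^2+z^4$ for every $n$.
\item Consistently, their determinants are $9$ and $25$ for all $n$.
\end{itemize}
So neither the Alexander polynomial nor the determinant can separate these families from a knot with the same Conway polynomial. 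You would need the HOMFLY (or Jones) polynomial as a function of $n$, or a crossing-number argument via reduced Montesinos diagrams to cut down to finitely many $n$. Also, neither hyperbolicity nor a tabulated Conway notation by itself shows that a knot is not Montesinos.
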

\begin{proof}
These knots are fibered, genus two knot with $u(K)\leq 2$.
Furthermore, these knots have even $a_2(K)$ hence by Corollary \ref{cor:Conway-obstruction-typeX} they do not admit clasp disk of type X. Since these knots do not appear in the list of Theorem \ref{theorem:main}, they do not admit a clasp disk of type II, either. 
\end{proof}

A genus two fibered knot has $a_4(K)= \pm 1 \not \equiv 3 \pmod 8$ so aforementioned Kadokami-Kawamura's obstruction does not apply. The knots in Corollary \ref{cor:cl>2} gives first examples of fibered knots that satisfy $cl(K)>\max\{g(K),u(K)\}$.

\section*{Acknowledgement}
The author is partially supported by JSPS KAKENHI Grant Number 21H04428, 23K03110 and 25H00588. He would like to thank Teruhisa Kadokami for his comments.

\section{A quick review of Conway and HOMFLY polynomial of clasp number two knots}

In this section we quickly review a computation of Conway and the zero-th coefficient HOMFLY polynomial of clasp number two knots that has been appeared in \cite{kk,ta}. We will observe these invariants give a constraint for a knot to admit a clasp disk of type X.

\subsection{Conway polynomial}
Let $K=K_{\gamma_1,\gamma_2}$ be a knot with clasp number two and let $D$ be its clasp disk with two clasp sigularities $\gamma_1,\gamma_2$. 
We denote by $\varepsilon_i$ ($i=1,2$) the sign of the clasp singularity $\gamma_i$.

We consider the following knots derived from $K$ that forms a skein resolution tree 
(See Figure \ref{fig:clasp-skein} below).

\begin{itemize}
\item $K_{u,\gamma_2}$ is the link obtained from $K$ by resolving the clasp $\gamma_1$.
\item $K_{o,\gamma_2}$ is the link obtained from $K$ by smoothing the clasp $\gamma_1$.
\item $K_{u,u} $ is the link obtained from $K_{u,\gamma_2}$ by resolving the clasp $\gamma_2$. This is just the unknot.
\item $K_{u,o}$ is the link obtained from $K_{u,\gamma_2}$ by smoothing the clasp $\gamma_2$.
\item $K_{o,u}$ is the link obtained from $K_{o,\gamma_2}$ by resolving the clasp $\gamma_2$. 
\item $K_{o,o}$ is the link obtained from $K_{o,\gamma_2}$ by smoothing the clasp $\gamma_2$.
\end{itemize}

\begin{figure}[htbp]
\begin{center}
\includegraphics*[width=120mm]{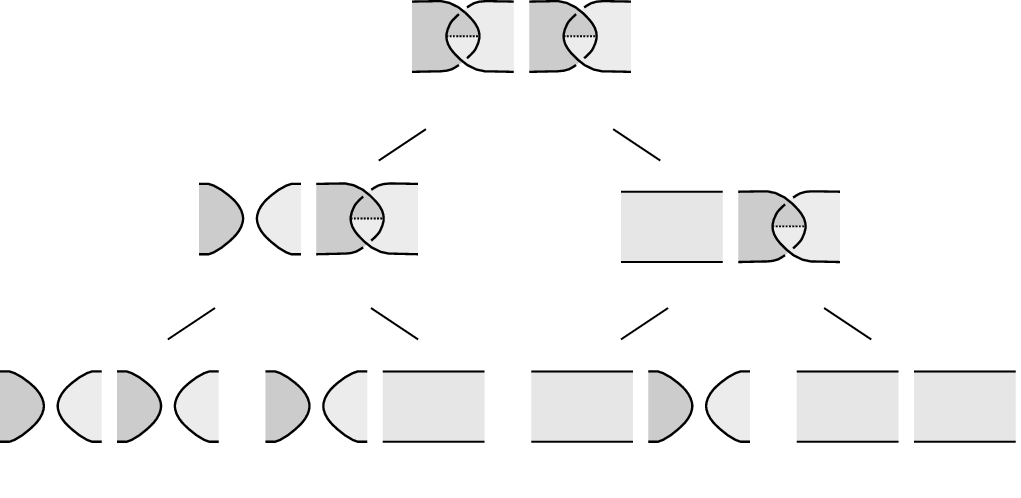}
\begin{picture}(0,0)
\put(-195,130){$K=K_{\gamma_1,\gamma_2}$}
\put(-255,68){$K_{u,\gamma_2}$}
\put(-112,68){$K_{o,\gamma_2}$}
\put(-335,5){$K_{u,u}(=\mbox{Unknot})$}
\put(-224,5){$K_{u,o}$}
\put(-133,5){$K_{o,u}$}
\put(-46,5){$K_{o,o}$}
\end{picture}
\caption{Skein tree: the knots $K_{u,\gamma_2}$, $K_{o,\gamma_2}$,$K_{u,u}$,$K_{u,o}$,$K_{o,u}$, $K_{o,o}$} 
\label{fig:clasp-skein}
\end{center}
\end{figure} 

We denote by $S_{\ast,\ast}$ ($\ast \in \{o,u\}$) the Seifert surfaces of $K_{\ast,\ast}$ that comes from the clasp disk $D$. 
In the following, we often regard $S_{o,u}$ and $S_{u,o}$ as subsurfaces of $S_{o,o}$ in an obvious manner.

By the skein relation, the Conway polynomial of $K$ is given by
\[ \nabla_{K}(z) = 1+ \varepsilon_1z\nabla_{K_{o,u}}(z)+\varepsilon_2z\nabla_{K_{u,o}}(z) + \varepsilon_1\varepsilon_2z^{2}\nabla_{K_{o,o}}(z)\]

To give more detailed expressions of the Conway polynomials of $K_{o,u},K_{u,o}, K_{o,o}$, we classify the clasp disk into the following two types. 

\begin{definition}
\label{definition:type}
We say that a clasp disk $D$ is \emph{of type $X$} (resp. \emph{of type II}) if $K_{o,o}$ is a knot (resp. 3-component link)\footnote{In \cite{ta}, type X and type II is referred as type 0 and type 1, respectively. }.
\end{definition}

See Figure \ref{fig:clasp-surface-type} for schematic illustrations of clasp disks of type X and type II, where the meaning of `X' and `II' are clear.

\begin{figure}[htbp]
\begin{center}
\includegraphics*[width=80mm]{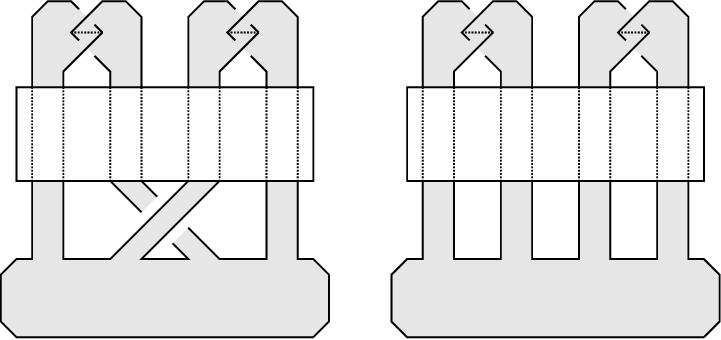}
\begin{picture}(0,0)
\put(-240,100){(a)}
\put(-120,100){(b)}
\end{picture}
\caption{(a) Clasp disk of type X. (b) Clasp disk of type II. A box represents some tangle.} 
\label{fig:clasp-surface-type}
\end{center}
\end{figure} 

First of all, $K_{o,u}$ and $K_{u,o}$ are two-component links bounding an annulus so  its Conway polynomial is given by 
\[ \nabla_{K_{o,u}}(z) = \ell_1 z, \quad \nabla_{K_{u,o}}(z) = \ell_2 z\]
where $\ell_1,\ell_2$ are the linking number of the links $K_{o,u}$ and $K_{u,o}$, respectively.

\begin{figure}[htbp]
\begin{center}
\includegraphics*[width=90mm]{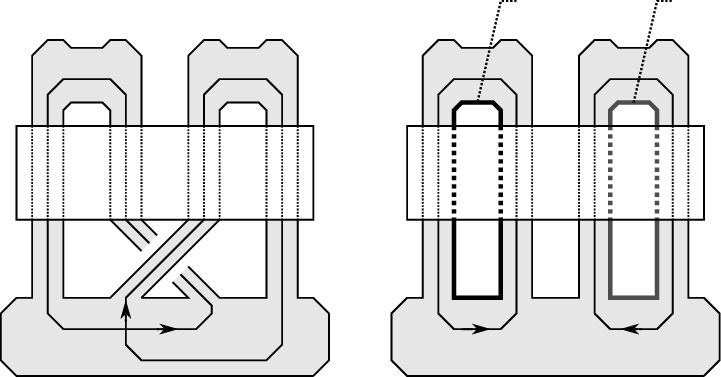}
\begin{picture}(0,0)
\put(-265,130){(i)}
\put(-250,10){$\alpha$}
\put(-155,10){$\beta$}
\put(-145,130){(ii)}
\put(-110,10){$\alpha$}
\put(-15,10){$\beta$}
\put(-70,130){$K_1$}
\put(-15,130){$K_2$}
\end{picture}
\caption{(i) Basis for type X case. (ii) Basis for type II case. A key feature is that the component $K_1$ is equal to the component of $K_{o,u}$ and the component $K_2$ is equal to $K_2$.} 
\label{fig:clasp-basis}
\end{center}
\end{figure} 

Assume that the clasp disk $D$ is of type X.
Let $\alpha$ and $\beta$ be the simple closed curves on $S_{o,o}$ taken as follows (see Figure \ref{fig:clasp-basis} (i)).
\begin{itemize}
\item $\alpha$ is the core of the annulus $S_{o,u}$. 
\item $\beta$ is the core of the annulus $S_{u,o}$.
\item The orientation of $\alpha$ and $\beta$ are taken so that they positively intersect at one point.
\end{itemize}

Let $\alpha_+$ be the curve $\alpha$ pushed off along the normal direction of $S$, and let $\alpha_{+}^r$ be the curve $\alpha_+$ with reverse orientation.
The curves $\beta_+$ and $\beta_{+}^r$ are defined similarly.

Then
\begin{align*}
\ell_1 &= lk(K_{o,u}) = lk(\alpha,\alpha_{+}^{r}) = -lk(\alpha,\alpha_{+})\\
\ell_2 &= lk(K_{u,o}) = lk(\beta,\beta_+^r) = -lk(\beta,\beta_+^r). 
\end{align*}
Finally, let 
\[ \ell = lk(\alpha,\beta_+) \]
Thus the Seifert matrix of $S_{o,o}$ is $\begin{pmatrix}-\ell_1 & \ell \\ \ell +1 & -\ell_2 \end{pmatrix}$ hence
\[ \nabla_{K_{o,o}}(z) =1+ (\ell_1\ell_2 - \ell(\ell+1))z^{2}  \]

When the clasp disk $D$ is of type II, we take the simple closed curves $\alpha$ and $\beta$ similarly (see Figure \ref{fig:clasp-basis} (ii)). 
Then the Seifert matrix of $S_{o,o}$ is $\begin{pmatrix}-\ell_1 & \ell\\ \ell & -\ell_2 \end{pmatrix}$ hence
\[ \nabla_{K_{o,o}}(z) =1+ (\ell_1\ell_2 - \ell^2)z^{2}\]

Summarizing, we get the following formula of the Conway polynomial of clasp number two knots.

\begin{theorem}\cite{kk}
\label{theorem:Conway}
If $K$ is a clasp number two knot, then 
\[ \nabla_K(z) = \begin{cases} 1+ (\varepsilon_1\ell_1 + \varepsilon_2\ell_2+\varepsilon_1\varepsilon_2) z^{2} + \varepsilon_1 \varepsilon_2(\ell_1\ell_2-\ell(\ell+1))z^{4} & (\mbox{if }D\mbox{ is of type }X)\\
1+ (\varepsilon_1\ell_1 + \varepsilon_2\ell_2) z^{2} + \varepsilon_1 \varepsilon_2(\ell_1\ell_2-\ell^2)z^{4} & (\mbox{if }D\mbox{ is of type }II) \end{cases}\]
\end{theorem}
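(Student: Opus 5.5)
The formula follows from two applications of the Conway skein relation at the two clasps, combined with the Seifert matrix computations of the preceding paragraphs.

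\emph{Step 1: the skein expansion.} Near a clasp singularity $\gamma_i$ of sign $\varepsilon_i$, $K_{\ast,\gamma_2}$ (or $K_{\gamma_1,\gamma_2}$) and its resolution differ by a single crossing change. The oriented smoothing of that crossing is the smoothing of the clasp. Hence
\[ \nabla_{K_{\gamma_1,\gamma_2}}=\nabla_{K_{u,\gamma_2}}+\varepsilon_1 z\nabla_{K_{o,\gamma_2}}. \]
Applying the same relation at $\gamma_2$ to both terms, and using $K_{u,u}=$ unknot, gives
\[ \nabla_{K}(z) = 1+ \varepsilon_1z\nabla_{K_{o,u}}(z)+\varepsilon_2z\nabla_{K_{u,o}}(z) + \varepsilon_1\varepsilon_2z^{2}\nabla_{K_{o,o}}(z). \]
The point to verify is that the sign conventions of Figure \ref{fig:clasp} (a) match those of the skein relation. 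This is a local check on the picture; since the positive clasp smooths to a positive Hopf band plumbing, the sign is $+\varepsilon_i$.

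\emph{Step 2: the two-component links.} $K_{o,u}$ bounds the annulus $S_{o,u}$ with core $\alpha$. A link bounding an annulus has Seifert matrix $(\pm\ell_1)$ of size $1\times1$, so $\nabla_{K_{o,u}}=\ell_1z$ with $\ell_1$ its linking number. The same argument gives $\nabla_{K_{u,o}}=\ell_2 z$.

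\emph{Step 3: $K_{o,o}$.} Take the basis $\alpha,\beta$ of $H_1(S_{o,o})$. The diagonal entries are $lk(\alpha,\alpha_+)=-\ell_1$ and $lk(\beta,\beta_+)=-\ell_2$, because the boundary of the annulus $S_{o,u}$ is $\alpha\cup\alpha_+^r$ up to isotopy.

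For type X, $\alpha\cdot\beta=1$, so the off-diagonal entries differ by one: $V-V^T$ is the intersection form. This gives the matrix $\begin{pmatrix}-\ell_1&\ell\\ \ell+1&-\ell_2\end{pmatrix}$.

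For type II, $S_{o,o}$ is a pair of pants and $\alpha,\beta$ are disjoint, so the matrix is symmetric.

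Then $\nabla_{K_{o,o}}(z)=\det(z^{-1}... )$; concretely, $\nabla(z)=\det(tV-t^{-1}V^T)$ with $z=t-t^{-1}$. For a $2\times2$ matrix $V$ this equals $\det(V)\,z^2$ plus a constant determined by the intersection form.
\begin{itemize}
\item Type X: the constant is $1$, giving $1+(\ell_1\ell_2-\ell(\ell+1))z^2$.
\item Type II: the link has three components, so $\nabla=z^2\det V$. This gives $(\ell_1\ell_2-\ell^2)z^2$, not $1+\dots$.
\end{itemize}
The type II case should be double-checked against the displayed formula. Substituting, type II gives $\varepsilon_1\varepsilon_2(\ell_1\ell_2-\ell^2)z^4$ with no extra $z^2$ term, while type X picks up the extra $\varepsilon_1\varepsilon_2z^2$ from the constant $1$.

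\emph{Step 4: collect terms.} Substitute Steps 2 and 3 into Step 1 and collect powers of $z$. This yields exactly the two cases of the statement.

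The main obstacle is the bookkeeping in Step 3: checking the orientation conventions that make the off-diagonal entries $\ell$ and $\ell+1$ (rather than $\ell-1$), and computing the determinant correctly in each type.
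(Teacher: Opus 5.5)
Your proposal is correct and follows the paper's argument: expand by the skein relation at both clasps, use $\nabla=\ell_i z$ for the two annulus-bounded links, and read off $\nabla_{K_{o,o}}$ from the $2\times 2$ Seifert matrix of $S_{o,o}$. Your identity $\nabla=\det(V)\,z^2+(V_{12}-V_{21})^2$ gives $\nabla_{K_{o,o}}=(\ell_1\ell_2-\ell^2)z^2$ with no constant term in type II, as it must for a 3-component link, and this is exactly what the stated formula requires; the paper's intermediate display $1+(\ell_1\ell_2-\ell^2)z^2$ is a slip, and your version is the right one.
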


As an application of the Conway polynomial computation, we give the following simple constraint for knots to admit a clasp disk of type X.

\begin{corollary}
\label{cor:Conway-obstruction-typeX}
If $K$ is a clasp number two genus two knot. If $a_4(K)\equiv 1 \pmod{2}$ and $a_{2}(K) \equiv 0 \pmod{2}$ then $K$ does not admit a clasp disk of type X.
\end{corollary}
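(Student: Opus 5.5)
We argue by contradiction, using the type X case of Theorem \ref{theorem:Conway}. Suppose $K$ has clasp number two and admits a clasp disk $D$ of type X, with clasp signs $\varepsilon_1,\varepsilon_2 \in \{\pm 1\}$ and integers $\ell_1,\ell_2,\ell$ as above. Then
\[ a_2(K) = \varepsilon_1\ell_1+\varepsilon_2\ell_2+\varepsilon_1\varepsilon_2, \qquad a_4(K)=\varepsilon_1\varepsilon_2\bigl(\ell_1\ell_2-\ell(\ell+1)\bigr). \]
The whole argument is a parity computation on these two coefficients.

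First, $\ell(\ell+1)$ is a product of consecutive integers, so it is always even. Since $\varepsilon_1\varepsilon_2=\pm 1$, we get $a_4(K)\equiv \ell_1\ell_2 \pmod 2$. The hypothesis $a_4(K)\equiv 1 \pmod 2$ therefore forces $\ell_1$ and $\ell_2$ both to be odd.

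Second, signs do not matter modulo $2$, so $a_2(K)\equiv \ell_1+\ell_2+1 \pmod 2$. With $\ell_1,\ell_2$ odd this gives $a_2(K)\equiv 1+1+1 \equiv 1 \pmod 2$. This contradicts the hypothesis $a_2(K)\equiv 0\pmod 2$, so $K$ admits no clasp disk of type X.

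There is no real obstacle here. The only point that needs care is that the formula of Theorem \ref{theorem:Conway} holds for every clasp disk of type X with exactly two clasps. So the argument excludes all such disks, not only a particular one. The genus two hypothesis is not actually used, beyond making the condition $a_4(K)$ odd natural; for instance, for fibered genus two knots $a_4(K)=\pm1$.
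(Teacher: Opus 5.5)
Your proof is correct and is the same parity argument as the paper's, applied to the type X formula of Theorem \ref{theorem:Conway}. Your version is in fact the accurate one: the paper's written proof drops the $\varepsilon_1\varepsilon_2$ term and writes $a_2(K)\equiv \ell_1+\ell_2\equiv 0 \pmod 2$, whereas keeping that term gives $a_2(K)\equiv \ell_1+\ell_2+1\equiv 1 \pmod 2$, which is exactly the contradiction you derive.
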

\begin{proof}
Assume that $K$ admits a clasp disk of type X.
By Theorem \ref{theorem:Conway}
$a_4(K) =\varepsilon_1 \varepsilon_2(\ell_1\ell_2-\ell(\ell+1)) \equiv \ell_1\ell_2 \pmod{2}$ hence if $a_4(K) \equiv 1 \pmod{2}$ then $a_2(K)=\varepsilon_1\ell_1+  \varepsilon_2\ell_2 \equiv \ell_1 + \ell_2 \equiv 0 \pmod{2}$.  
\end{proof}

In particular, it follows that genus two, clasp number two fibered knot $K$ does not admit a clasp disk of type X if $a_2(K)$ is even.

\begin{remark}
\label{remark:ell}
When $D$ is of type II, the component $K_1$ of $K_{o,o}$ which is parallel to $\alpha^{r}$ ($\alpha$ with reverse orientation) is identical with the component of the link $K_{o,u}$.
Similarly, the component $K_2$ of $K_{o,o}$ which is parallel to $\beta$ is identical with the component of the link $K_{u,0}$. The remaining component $K_3$ is a band sum of $K_1$ and $K_2$. Therefore their linking numbers are given by 
\begin{equation}
\label{eqn:ell-formula}
\begin{cases}
lk(K_1,K_2)=lk(\alpha^{r},\beta) = -lk(\alpha,\beta_{+}) = -\ell \\
lk(K_1,K_3)=lk(\alpha^{r}, \alpha_{+}) + lk(\alpha^{r}, \beta^{r}) =\ell_1 + \ell \\
lk(K_2,K_3) = lk(\beta, \alpha) + lk(\beta,\beta^{r}) = \ell_2 + \ell \\
\end{cases}
\end{equation}
Thus when $D$ is of type II, $\nabla_K(z)$ is determined by the signs $\varepsilon_1,\varepsilon_2$ of the clasps and the linking numbers of the link $K_{o,o}$.
\end{remark}

\subsection{The (zero-th) coefficient of the HOMFLY polynomial}

We use the same argument to give a formula of the zeroth-coefficient of HOMFLY polynomial.

The HOMFLY polynomial of a knot or link $K$ is defined by the skein relation
\[ v^{-1} P_{K_+}(v,z) -  vP_{K_-}(v,z)= z P_{K_0}(v,z), \quad P_{\sf Unknot}(v,z) =1 \]
where $K_+, K_-,K_0$ are so-called the skein triple, links that are identical except in a small ball in which they are as illustrated below.
\[ \begin{picture}(30,45)
\put(0,15){\vector(1,1){30}}
\put(30,15){\line(-1,1){10}}
\put(10,35){\vector(-1,1){10}}
\put(8,0){$K_+$}
\end{picture}
\qquad
\begin{picture}(30,45)
\put(0,15){\line(1,1){10}}
\put(30,15){\vector(-1,1){30}}
\put(20,35){\vector(1,1){10}}
\put(8,0){$K_-$}
\end{picture} 
\qquad
\begin{picture}(30,45)
\qbezier(0, 15)(20,30)(0,45)
\put(0,45){\vector(-1,1){0}}
\qbezier(30, 15)(10,30)(30,45)
\put(30,45){\vector(1,1){0}}
\put(10,0){$K_0$}
\end{picture} 
\]
It is known that $(v^{-1}z)^{\#K-1}P_K(v,z) \in \Z[v^{-2},v^2][z^{2}]$ so we write
\[ P_K(v,z)=(v^{-1}z)^{-\#K + 1} \sum_{i=0} p^{i}_K(v)z^{2i}, \qquad p^{i}_K(v) \in \Z[v^2,v^{-2}] \]
where $\#K$ is the number of components of $K$. We call the polynomial $p^{i}_K(v)$ the \emph{$i$-th coefficient (HOMFLY) polynomial} of $K$.

The skein relation of the HOMFLY polynomial leads to the following much simpler skein relation 
\[ v^{-2}p_{K_+}^{0}(v) - p_{K_-}^0(v)= \begin{cases} p^{0}_{K_0}(v) & \mbox{ if } \delta=0,  \\
0 & \mbox{ if } \delta=1. \end{cases}\]
of the zeroth coefficient polynomial $p^{0}_K(v)$.
Here $\delta =\frac{1}{2}(\#K_{+}-\# K_{0} +1)$. Namely, $\delta = 0$ (resp. $1$) if and only if the two strands of the skein crossing belong to the same component  (resp. different components).

Consequently, for an $n$-component link $K= K_1 \cup \cdots\cup K_n$,  the zeroth coefficient polynomial of link is determined by the zeroth coefficient polynomials $p^{0}_{K_i}(v)$ of its components and the (total) linking number $lk(K)=\sum_{i<j}lk(K_i,K_j)$, as
\[ 
p^0_{K}(v) =(v^{-2}-1)^{n-1} v^{2lk(K)}p^{0}_{K_1}(v)p^{0}_{K_2}(v) \cdots p^{0}_{K_n}(v)
\]
When a 2-component link $L=K\cup K'$ bounds an annulus, then their two components are the same knot with opposite orientation. Since the HOMFLY polynomial of knots are insensitive to orientation, it follows that 
\begin{equation}
\label{eqn:p_0-annulus}
p^0_{L}(v) =(v^{-2}-1)v^{2lk(K)}p^{0}_{K}(v)^2 = (v^{-2}-1)v^{2lk(K)}p^{0}_{K'}(v)^2. 
\end{equation} 

\begin{remark}
These particular properties of $p^{0}_K(v)$ have applications for cosmetic crossing conjecture \cite{it1} and unknotting number \cite{it2}.
Also, the simplicity of the skein relation implies that the computation of zeroth coefficient polynomial is much easier than the computation of the whole HOMFLY polynomial, both in theoretically \cite{pr} and in practice.
It can serve as a tool to distinguish infinite family of knots which are complicated enough to compute other invariants (see, for example, \cite{ta2,kp}).
\end{remark}

One can compute the zeroth coefficient polynomial of clasp number two knot $K$ in a similar manner as Conway polynomials. By the skein relation, the HOMFLY polynomial of $K$ is given by
\begin{align}
\label{eqn:P_K-formula}
P_{K}(v,z) & = v^{2\varepsilon_1+2\varepsilon_2} + \varepsilon_1 v^{\varepsilon_1+2\varepsilon_2} zP_{K_{o,u}}(v,z) \\
& \qquad +\varepsilon_2 v^{2\varepsilon_1+\varepsilon_2} zP_{K_{u,o}}(v,z) + \varepsilon_1\varepsilon_2 v^{\varepsilon_1+\varepsilon_2}z^{2}P_{K_{o,o}}(v,z). \nonumber
\end{align}
Since both $K_{o,u}$ and $K_{u,o}$ are two component links bounding an annulus, by  
\eqref{eqn:p_0-annulus}
\[ p^0_{K_{o,u}}(v) = (v^{-2}-1)v^{2\ell_1}p^{0}_{K_1}(v)^2 \]
\[ p^0_{K_{u,o}}(v) = (v^{-2}-1)v^{2\ell_1}p^{0}_{K_2}(v)^2 \]
where $K_1$ (resp. $K_2$) is a component of $K_{o,u}$ (resp. $K_{u,o}$).

When $D$ is of type X, the knot $K_{o,o}$ does not contribute to $p^{0}_K(v)$.

When $D$ is of type II, $K_{o,o}$ is a 3-component link whose components are $K_1,K_2$ and the other component $K_3$. By \eqref{eqn:ell-formula} it follows that 
\begin{align*}
P_{K_{o,o}}(v,z) &= (v^{-1}-v)^2v^{2(lk(K_1,K_2)+lk(K_2,K_3)+lk(K_1,K_3))}p^{0}_{K_1}(v)p^{0}_{K_2}(v)p^{0}_{K_3}(v) \\
&= (v^{-1}-v)^2v^{2(\ell_1+\ell_2+\ell)}p^{0}_{K_1}(v)p^{0}_{K_2}(v)p^{0}_{K_3}(v)
\end{align*}

Therefore by \eqref{eqn:P_K-formula} we get a more precise version of the formula of the zeroth coefficient.
\begin{theorem}\cite[Theorem 1.3]{ta}
\label{theorem:HOMFLY0}
Let $K$ be a clasp number two knot. 
\begin{itemize}
\item[(i)] If $K$ admits a clasp disk of type X,
\begin{align*}
\label{eqn:p0-typeX}
p^{0}_K(z) &= v^{2(\varepsilon_1+\varepsilon_2)}  
+ \varepsilon_1 v^{\varepsilon_1+2\varepsilon_2}(v^{-1}-v)v^{2\ell_1}(p^{0}_{K_1}(v))^{2} \\
& \qquad + \varepsilon_2 v^{\varepsilon_2+2\varepsilon_1}(v^{-1}-v)v^{2\ell_2}(p^{0}_{K_2}(v))^{2}
\end{align*}
where $\varepsilon_1,\varepsilon_2 \in \{ \pm 1\}, \ell_1, \ell_2 \in \Z$, such that $a_2(K) = \varepsilon_1\ell_1 + \varepsilon_1\ell_2+\varepsilon_1\varepsilon_2$.
\item[(ii)] If $K$ admits a clasp disk of type II, then
\begin{align*}
p^{0}_K(v) &= v^{2(\varepsilon_1+\varepsilon_2)}  
+ \varepsilon_1 v^{\varepsilon_1+2\varepsilon_2}(v^{-1}-v)v^{2\ell_1}(p^{0}_{K_1}(v))^{2} \\
& \qquad + \varepsilon_2 v^{\varepsilon_2+2\varepsilon_1}(v^{-1}-v)v^{2\ell_2}(p^{0}_{K_2}(v))^{2}\\
&\qquad +  \varepsilon_1\varepsilon_2v^{2(\ell_1+\ell_2+\ell)+\varepsilon_1+\varepsilon_2}(v^{-1}-v)^2p_{K_1}^{0}(v)p_{K_2}^{0}(v)p_{K_3}^{0}(v)
\end{align*} where $\varepsilon_1,\varepsilon_2 \in \{ \pm 1\}, \ell_1, \ell_2, \ell \in \Z$ such that
 $a_2(K)= \varepsilon_1\ell_1 + \varepsilon_2 \ell_2$ and  $a_4(K)=\varepsilon_1\varepsilon_2(\ell_1\ell_2-\ell^2)$.
\end{itemize}
\end{theorem}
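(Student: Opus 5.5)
The plan is to evaluate the HOMFLY skein expansion \eqref{eqn:P_K-formula} at the lowest power of $z$, using the factorization of $p^0$ for multi-component links and the annulus formula \eqref{eqn:p_0-annulus}, separately for the two types of clasp disk. The relevant degrees are as follows.

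\begin{itemize}
\item For a knot, $P = p^0 + O(z^2)$.
\item For a two-component link $L$, $P_L = (v^{-1}z)^{-1}(p^0_L + O(z^2))$, so $zP_L = v\,p^0_L + O(z^2)$.
\item For a three-component link $L$, $z^2 P_L = v^2 p^0_L + O(z^2)$.
\end{itemize}

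Hence the terms $zP_{K_{o,u}}$, $zP_{K_{u,o}}$, and, in the type II case, $z^2P_{K_{o,o}}$ all contribute at order $z^0$. In the type X case $K_{o,o}$ is a knot, so $z^2P_{K_{o,o}} = O(z^2)$ and it drops out.

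The first step is to set up the skein expansion. I would first verify \eqref{eqn:P_K-formula}. Write the skein relation at the crossing $c_1$ realizing clasp $\gamma_1$ as
\[ P_{K_{\varepsilon}} = v^{2\varepsilon}P_{K_{-\varepsilon}} + \varepsilon v^{\varepsilon} z P_{K_0}. \]
Here $K_{-\varepsilon}$ is the resolution and $K_0$ is the smoothing; a clasp is changed by one crossing change, and the oriented smoothing of that crossing is exactly the smoothing of the clasp in Figure~\ref{fig:clasp}(d). Applying this twice, first at $\gamma_1$ and then at $\gamma_2$ in each branch, and using $P_{K_{u,u}} = 1$ gives \eqref{eqn:P_K-formula}. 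The exponents of $v$ must be tracked carefully: the coefficients come out as $v^{2\varepsilon_1+2\varepsilon_2}$, $\varepsilon_1 v^{\varepsilon_1+2\varepsilon_2}$, and so on.

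The second step is to substitute the component formulas. For $K_{o,u}$ and $K_{u,o}$, which bound annuli, use \eqref{eqn:p_0-annulus}. Since $K_{o,u}$ is a two-component link, the prefactor $(v^{-1}z)^{-1}$ combined with the extra $z$ gives the factor $v$, which converts $(v^{-2}-1)$ into $(v^{-1}-v)$. The linking numbers are $\ell_1$ and $\ell_2$. This yields the two middle terms in both (i) and (ii), with $K_1$ (resp.\ $K_2$) the component of $K_{o,u}$ (resp.\ $K_{u,o}$).

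For type II, apply the general formula
\[ p^0_L = (v^{-2}-1)^{n-1}v^{2lk(L)}\prod_i p^0_{K_i} \]
with $n=3$. The factor $v^2$ coming from $z^2(v^{-1}z)^{-2}$ gives $(v^{-1}-v)^2$. Remark~\ref{remark:ell} identifies two of the components of $K_{o,o}$ with $K_1$ and $K_2$, and gives the total linking number
\[ -\ell + (\ell_1+\ell) + (\ell_2+\ell) = \ell_1 + \ell_2 + \ell. \]
This produces the final term of (ii).

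The third step is to record the constraints on the parameters. The conditions on $a_2$ and $a_4$ are read off directly from Theorem~\ref{theorem:Conway}.

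The main obstacle I expect is bookkeeping rather than concept, and it has two parts. The first is getting the sign and $v$-exponent conventions of the clasp-sign skein relation consistent with the orientation conventions of Figure~\ref{fig:clasp}. The second is justifying that the components of $K_{o,u}$ are isotopic to each other as unoriented knots, so that $p^0$, which is orientation-insensitive, is squared. That follows because they cobound the annulus $S_{o,u}$.
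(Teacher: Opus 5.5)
Your proposal is correct and follows essentially the same route as the paper. It expands $P_K$ through the two-step skein tree \eqref{eqn:P_K-formula} and extracts the $z^0$ coefficient, using \eqref{eqn:p_0-annulus} for $K_{o,u},K_{u,o}$ and, in type II, the multi-component factorization of $p^0$ with the linking numbers from Remark~\ref{remark:ell}; in type X the knot $K_{o,o}$ only contributes at order $z^2$. Your explicit tracking of the prefactor $(v^{-1}z)^{1-\#K}$ is slightly cleaner than the paper's displays, which write $v^{2\ell_1}$ where $v^{2\ell_2}$ is meant for $K_{u,o}$ and omit the $z^2$ in front of $P_{K_{o,o}}$; also, reading $a_2$ off Theorem~\ref{theorem:Conway} gives $\varepsilon_2\ell_2$ in (i), which corrects the misprint $\varepsilon_1\ell_2$ in the statement.
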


This gives an additional constraint for a knot to admit a clasp disk of type X.
\begin{corollary}
If a clasp number two knot $K$ admits a clasp disk of type X, then
\begin{align*}
\frac{p^{0}_K(z)-v^{2(\varepsilon_1+\varepsilon_2)}}{{v^{-2}-1}}
& = \varepsilon_1 v^{1+\varepsilon_1+2(\ell_1+\varepsilon_2)}p^{0}_{K_1}(v)^2 + \varepsilon_2v^{1+\varepsilon_2+2(\ell_2+\varepsilon_1)}p^{0}_{K_2}(v)^2\\
& = \varepsilon_1 f_1(v)^2 + \varepsilon_2 f_2(v)^2
\end{align*}
for some $f_1(v),f_2(v) \in \Z[v^2,v^{-2}] \cup v\Z[v^{2},v^{-2}]$.
\end{corollary}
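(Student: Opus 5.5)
Starting from Theorem \ref{theorem:HOMFLY0}(i), I would subtract the leading term $v^{2(\varepsilon_1+\varepsilon_2)}$ and then divide by $v^{-2}-1$. For this it helps to rewrite $v^{-1}-v$ as $v(v^{-2}-1)$.

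\textbf{Step 1: the quotient.} Dividing the $K_1$ term by $v^{-2}-1$ leaves
\[
\varepsilon_1 v^{\varepsilon_1+2\varepsilon_2}\, v\, v^{2\ell_1}\, p^0_{K_1}(v)^2
= \varepsilon_1 v^{1+\varepsilon_1+2(\ell_1+\varepsilon_2)} p^0_{K_1}(v)^2 .
\]
The $K_2$ term becomes $\varepsilon_2 v^{1+\varepsilon_2+2(\ell_2+\varepsilon_1)}p^0_{K_2}(v)^2$ in the same way. This already gives the first equality.

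\textbf{Step 2: the square form.} Since $\varepsilon_i=\pm1$, the exponent $1+\varepsilon_i$ is either $0$ or $2$. So each prefactor is an even power $v^{2m_i}$ with
\[
m_1=\ell_1+\varepsilon_2+\tfrac{1+\varepsilon_1}{2}, \qquad m_2=\ell_2+\varepsilon_1+\tfrac{1+\varepsilon_2}{2}.
\]
Setting $f_i(v)=v^{m_i}p^0_{K_i}(v)$ then gives $v^{2m_i}p^0_{K_i}(v)^2=f_i(v)^2$.

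\textbf{Step 3: membership.} Because $p^0_{K_i}(v)\in\Z[v^2,v^{-2}]$, each $f_i$ lies in $\Z[v^2,v^{-2}]$ when $m_i$ is even and in $v\Z[v^2,v^{-2}]$ when $m_i$ is odd. This is exactly the stated union.

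The only points needing care are the sign bookkeeping in $v^{-1}-v=v(v^{-2}-1)$ and the parity of the exponent $1+\varepsilon_i$, which is what makes the prefactor a perfect square.
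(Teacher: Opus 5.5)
Your proposal is correct and is exactly the argument the paper intends; the paper gives no written proof and treats the corollary as an immediate consequence of Theorem \ref{theorem:HOMFLY0}(i). That argument is: factor $v^{-1}-v=v(v^{-2}-1)$, note that $1+\varepsilon_i\in\{0,2\}$ makes each prefactor an even power $v^{2m_i}$, and take $f_i=v^{m_i}p^0_{K_i}(v)$, whose membership in $\Z[v^2,v^{-2}]\cup v\Z[v^2,v^{-2}]$ follows from the parity of $m_i$, as in your Step 3.
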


\section{Classification}

Our proof of classification is based on the observation that a Seifert surface from a clasp disk is obtained by Hopf plumbing.

Let $H_+$ and $H_-$ be the positive and negative Hopf link.
It is well known that they are fibered links. By \emph{positive and negative Hopf bands}, we mean the fiber surface of $H_+$ and $H_-$. By abuse of notation, we often confuse with Hopf links and Hopf bands. We use the same symbols $H_+$ and $H_-$ to represent Hopf bands rather than the Hopf links.

\begin{definition}[Hopf plumbing]
Let $K$ be a knot or link and $S$ be its connected Seifert surface. Let $\alpha$ be a properly embedded arc in $S$ and let $\gamma \subset H_{\pm}$ be the co-core of $H_{\pm}$, the properly embedded arc that its connects different boundary components. 

\emph{The positive Hopf plumbing along $\alpha$} is the surface $S_+ =S\cup_{f} H_{+}$, where $f: \alpha \times[-1,1] \rightarrow \gamma \times [-1,1]$ is a homeomorphism that identifies a regular neighborhood $\alpha \times [-1,1] \subset S$ and a regular neihborhood $\gamma \times [-1,1] \subset H_+$ so that $f(\alpha \times \{t\}) = [-1,1] \times\{p\})$ for $\alpha \in [-1,1], p \in \gamma$ and vice versa (see Figure \ref{fig:hopf-plumb}).

We say that $S_+$ is obtained from $S$ by plumbing positive Hopf band along the attaching arc $\alpha$. We often simply say that the link $L=\partial S_+$ is obtained by (positive or negative) Hopf plumbing of $K$, without referring surfaces $S$ and $S_{+}$.

The negative Hopf plumbing $S_-$ is defined similarly.
\end{definition}

\begin{figure}[htbp]
\begin{center}
\includegraphics*[width=60mm]{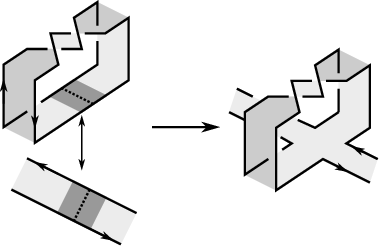}
\begin{picture}(0,0)
\put(-170,5){$S$}
\put(-170,100){$H_+$}
\put(-130,25){$\alpha$}
\put(-130,55){$\gamma$}
\put(-40,10){$S_+$}
\end{picture}
\caption{Hopf plumbing} 
\label{fig:hopf-plumb}
\end{center}
\end{figure} 

The Hopf plumbing is a special case of more general operation called the Murasugi sum. A key property of Hopf plumbing (that also holds for Murasugi sum) which we use is the following. 

\begin{theorem}\cite{st,ga}
\label{theorem:plumbing}
Let $S_{\pm}$ be a surface obtained from $S$ by positive or negative Hopf plumbing. $S_{\pm}$ is a fiber surface (i.e. it is a fiber of the fibered link $\partial S_{\pm}$) if and only if $S$ is a fiber surface.  
\end{theorem}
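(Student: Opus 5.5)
We would prove both directions at once using Gabai's sutured manifold criterion. A connected Seifert surface $F$ of a link $L$ is a fiber surface exactly when its complementary sutured manifold $(M_F,\gamma_F)$ is a product sutured manifold $F\times[0,1]$. Here $M_F=S^3\setminus \operatorname{int}N(F)$ and $\gamma_F$ is a neighborhood of $\partial F$ on the boundary. This holds because a fibration restricted to the complement of one fiber is a product, and conversely a product structure on $M_F$ glues up to a fibration of the link exterior. So the goal is to compare $(M_{S_\pm},\gamma_{S_\pm})$ with $(M_S,\gamma_S)$.

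The key step is to exhibit a product disk in $M_{S_\pm}$, that is, a properly embedded disk meeting the sutures in exactly two essential arcs. We take it from the Hopf band side. The complement of the Hopf band $H_\pm$ is a solid torus, and its sutured structure is a product $(\text{annulus})\times[0,1]$. The co-core $\gamma$ of $H_\pm$ is the attaching arc, so in the complement there is a meridian disk $E$ of this solid torus, disjoint from $\gamma\times[-1,1]$ and crossing the sutures twice. After plumbing, $E$ survives as a disk in $M_{S_\pm}$ lying on the $H_+$ side of the plumbing square. It meets $\gamma_{S_\pm}$ in two arcs, so it is a product disk.

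Next we decompose $(M_{S_\pm},\gamma_{S_\pm})$ along $E$ and identify the result. Cutting along $E$ destroys the core loop of the Hopf band. The Hopf band side then collapses to a $3$-ball with a single suture, attached along the plumbing square. This ball can be absorbed, and what remains is isotopic, as a sutured manifold, to $(M_S,\gamma_S)$. Geometrically, cutting $E$ undoes the plumbing.

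We then invoke Gabai's lemma that product disk decomposition preserves the product property in both directions. If $(M,\gamma)\rightsquigarrow(M',\gamma')$ is a product disk decomposition, then $(M,\gamma)$ is a product if and only if $(M',\gamma')$ is. The direction "$M'$ product $\Rightarrow$ $M$ product" is immediate, since one reglues $E\times[0,1]$ compatibly with the $I$-fibers. The converse needs $E$ to be isotopic to a vertical disk in the product, which is the substantive content of Gabai's result. Combining the two steps gives: $S_\pm$ is a fiber if and only if $S$ is.

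As a cross-check for the "if" direction we would also cite Stallings. The monodromy of $S_\pm$ is the monodromy of $S$, extended by the identity over the plumbed band, composed with a right- or left-handed Dehn twist along the core of $H_\pm$.

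We expect the main obstacle to be the careful verification that the sutured manifold obtained after cutting along $E$ is exactly $(M_S,\gamma_S)$. This requires tracking the sutures through the plumbing square and checking that the leftover ball carries a single suture and can be removed harmlessly. The second delicate point is the "only if" part of Gabai's lemma, which we would quote rather than reprove.
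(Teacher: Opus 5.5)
\paragraph{Comparison with the paper.} The paper gives no argument for this statement. It is quoted from Stallings, who showed that a Murasugi sum (in particular a Hopf plumbing) of fiber surfaces is a fiber surface, with monodromy picking up a Dehn twist along the core; this is exactly your cross-check. It also cites Gabai, whose theorem that a Murasugi sum is a fiber if and only if both summands are supplies the converse.

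Your proposal is a correct, self-contained version of Gabai's sutured-manifold method, specialized to Hopf plumbing. Gabai's theorem covers arbitrary Murasugi sums. There the natural splitting disk (the plumbing sphere $\Sigma$ minus the $2n$-gon) meets the sutures $2n$ times and is not a product disk, so more work is needed. For a Hopf band a single product disk suffices, and both directions then follow at once from the product-disk lemma. That is all the paper needs.

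\paragraph{The step you flag as the main obstacle.} This step does go through. Let $Q=\gamma\times[-1,1]$ be the plumbing square, and let $V$ be the Hopf-side ball minus a neighbourhood of $H_\pm$. Then $V$ is a solid torus, and $\partial V$ is the disk $\Sigma\setminus Q$ together with a once-punctured torus $T\subset\partial M_{S_\pm}$. Your disk $E$ is a meridian disk of $V$ with $\partial E\subset T$, and $\partial E$ crosses each of the two band edges (the sutures on $T$) once.

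Cutting along $E$ turns $V$ into a ball glued to the $S$-side along $\Sigma\setminus Q$. The free boundary disk of this ball carries two suture arcs joining the four corners of $Q$ in pairs; this is more precise than ``a ball with a single suture attached along the square''. There are two possible pairings. Only the pairing along the two sides of $Q$ lying in $\partial S$ is compatible with $\chi(R_+)$ rising by exactly one under a product disk decomposition, since $\chi(S)=\chi(S_\pm)+1$. With that pairing the result is $(M_S,\gamma_S)$.

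\paragraph{A small correction.} $\partial E$ is a Seifert-framed longitude of the core of $H_\pm$. Consequently it cannot avoid the $H$-side face of $Q$ in $\partial M_{S_\pm}$: the rest of $T$ is an annulus whose core is a meridian of the band. What you actually need, and what holds, is that $E$ lies in the half-space containing $H_\pm\setminus Q$.
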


Let $K$ be a genus two fibered knot of clasp number two and $D$ be its clasp disk.
Since as we have mentioned in introduction the genus two Seifert surface $S_{D}=S_{\gamma_1,\gamma_2}$ of $K=K_{\gamma_1, \gamma_2}$ from the clasp disk $D$ is obtained from the Seifert surface $S_{o,o}$ of $K_{o,o}$ by Hopf plumbing (see Figure \ref{fig:clasp} again), by Theorem \ref{theorem:plumbing} we get the following key observation.

\begin{lemma}
\label{lemma:key-observation}
Let $K$ be a genus two fibered knot of clasp number two and $D$ be its clasp disk. Then $K_{o,o}$ is either a genus one fibered knot, or, a genus zero 3-component fibered link.
The Seifert surface $S_{o,o}$ from the clasp disk $D$ is its fiber surface, and the knot $K$ is obtained from $S_{o,o}$ by plumbing two Hopf bands simultaneously.
\end{lemma}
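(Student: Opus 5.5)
The lemma follows by applying Theorem \ref{theorem:plumbing} twice, after identifying what the smoothings produce. First we fix the local picture. Smoothing a clasp $\gamma_i$ replaces a neighborhood of the clasp by an untwisted band, as in Figure \ref{fig:clasp} (d). The surface $S_D$ is then recovered from the smoothed surface by plumbing a Hopf band $H_{\varepsilon_i}$ along an arc crossing that band, as in Figure \ref{fig:clasp} (b). Doing this at both clasps, we get $S_D=S_{\gamma_1,\gamma_2}$ from $S_{o,o}$ by plumbing two Hopf bands $H_{\varepsilon_1}$ and $H_{\varepsilon_2}$. The two clasps have disjoint neighborhoods, so the two attaching arcs $\alpha_1,\alpha_2\subset S_{o,o}$ are disjoint, and the plumbings can be done simultaneously or in either order. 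In particular $S_{\gamma_1,\gamma_2}$ is obtained from the intermediate surface $S_{o,\gamma_2}$ (smooth $\gamma_1$ only) by one plumbing, and $S_{o,\gamma_2}$ is obtained from $S_{o,o}$ by one plumbing.

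Next, since $K$ is a fibered knot of genus two and $S_D$ has genus two (Euler characteristic $-3$, one boundary component), $S_D$ is a minimal genus Seifert surface for $K$. A minimal genus Seifert surface of a fibered knot is isotopic to its fiber, so $S_D$ is a fiber surface. Applying Theorem \ref{theorem:plumbing} to $S_{\gamma_1,\gamma_2}=S_{o,\gamma_2}\cup H_{\varepsilon_1}$ shows that $S_{o,\gamma_2}$ is a fiber surface. Applying it again to $S_{o,\gamma_2}=S_{o,o}\cup H_{\varepsilon_2}$ shows that $S_{o,o}$ is a fiber surface. Hence $\partial S_{o,o}=K_{o,o}$ is fibered with fiber $S_{o,o}$.

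Finally we read off the topology of $K_{o,o}$. Each Hopf plumbing lowers the Euler characteristic by one, so $\chi(S_{o,o})=\chi(S_D)+2=-1$. Since the surface is connected, a genus one surface with one boundary component and a genus zero surface with three boundary components are the only possibilities. By Definition \ref{definition:type}, these correspond to $D$ being of type X and of type II respectively. Thus $K_{o,o}$ is a genus one fibered knot or a genus zero three-component fibered link, and $K$ is obtained from its fiber $S_{o,o}$ by plumbing two Hopf bands.

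The main obstacle is the local verification that replacing the smoothing band at a clasp by the clasp-resolved surface is exactly a Hopf plumbing along an arc transverse to that band, with sign $\varepsilon_i$. The paper takes this from Figure \ref{fig:clasp}, and I would check it by a direct isotopy in a ball around the clasp.
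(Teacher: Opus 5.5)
Your proposal is correct and follows the paper's argument: $S_D$ is obtained from $S_{o,o}$ by two Hopf plumbings along disjoint arcs, and Theorem~\ref{theorem:plumbing} then forces $S_{o,o}$ to be a fiber surface. You also spell out two steps the paper leaves implicit. One is that $S_D$, being a genus-two Seifert surface of a genus-two fibered knot, is isotopic to the fiber, which is needed before the plumbing theorem can be applied. The other is the Euler characteristic count $\chi(S_{o,o})=-1$, which leaves only the type X and type II possibilities.
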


It is well-known that genus one fibered knot is either the (positive or negative) trefoil, or, the figure-eight knot. Similarly, a genus zero three component fibered links are classified in \cite{ro}.

\begin{theorem}[Classification of genus zero, 3-component fibered link \cite{ro}]
\label{theorem:chi-1-fibered}
Let $L$ be a fibered 3-component link of genus zero. Then $L$ is either
\begin{itemize}
\item[(A)] the connected sum of two Hopf links $H_{\pm} \# H_{\pm}$.
\item[(B)] the pretzel link $P(2,2n,-2)$ for $n \in \Z$ (see Figure \ref{fig:genus-zero-fiber} (i)).
\item[(C)] the exceptional link $L^{\sf ex}$ in Figure \ref{fig:genus-zero-fiber} (ii), or its mirror image.
\end{itemize}
\end{theorem}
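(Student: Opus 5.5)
Since the fiber $F$ of $L$ is a planar surface with three boundary components, i.e.\ a pair of pants $P$, my plan is to determine the exterior $E(L)$ first and only then the link. The first step concerns the monodromy $\phi$. It is a homeomorphism of $P$ fixing $\partial P$ pointwise. The mapping class group of $P$ rel boundary is free abelian, generated by the Dehn twists along the three boundary-parallel curves, so $\phi$ is a product of such boundary twists. Forgetting the boundary-fixing condition, $\phi$ is isotopic to the identity. Hence the mapping torus, which is $E(L)$, is homeomorphic to $P\times S^1$.

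As a consistency check, the boundary twists act trivially on $H_1(P)$. So if $V$ is the Seifert matrix of $P$ in the basis given by the cores $\alpha,\beta$ of its two bands, then $V^{-1}V^{T}=I$, so $V=\begin{pmatrix} a & c\\ c& b\end{pmatrix}$ is symmetric with $ab-c^2=\pm1$. This matches the type II form of the Seifert matrix of $S_{o,o}$ computed in Section 2.

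The second step classifies links $L\subset S^3$ with $E(L)\cong P\times S^1$. I will use the classification of Seifert fibered link exteriors in $S^3$ due to Burde--Murasugi. There are two cases.
\begin{itemize}
\item[(a)] Some boundary torus of $E(L)$ compresses in $S^3$ in the direction of the $S^1$-fiber. Then the $S^1$-fiber is a meridian of some components, and $L$ is a composite of Hopf-type pieces; one checks that only $H_{\pm}\#H_{\pm}$ arises, giving (A).
\item[(b)] Otherwise the product fibration extends to a Seifert fibration of $S^3$ with base $S^2$ and at most two exceptional fibers, of multiplicities $p,q$. In this case $L$ is a union of three fibers containing all exceptional fibers. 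The possibilities are: the two cores of a genus one Heegaard splitting plus a regular $(p,q)$ torus-knot fiber; or one core plus two regular fibers; or three regular fibers of the Hopf-type fibration.
\end{itemize}

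The third step extracts the genus zero condition in case (b). The fiber $F$ is a horizontal surface in $E(L)$ and has Euler characteristic $-1$. Since $E(L)$ is a product $P\times S^1$, horizontal surfaces whose boundaries are longitudes of $L$ with the correct orientations correspond to sections, possibly multi-section classes. Each boundary component of $F$ must meet the corresponding boundary torus in exactly one longitude, so the fiber degree on each component must be $1$. Computing the longitude slopes in terms of $p,q$ and the linking numbers, this forces the multiplicities to be small, essentially $\{p,q\}\subset\{1,2\}$ or $\{2,3\}$-like degenerate cases, together with a free integer parameter coming from how the regular fibers link. I then expect one family to be $P(2,2n,-2)$, giving (B), and the remaining sporadic configuration to be $L^{\sf ex}$ or its mirror, giving (C).

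I expect this last matching to be the main obstacle. It requires computing, for each configuration in (b), the intersection numbers of a horizontal surface with the meridian and longitude of each component, and then identifying the resulting links diagrammatically. To handle it I would first use the symmetric unimodular Seifert matrix constraint above to cut down the list, and only then draw the surviving Seifert fibers explicitly.
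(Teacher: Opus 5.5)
Your route is genuinely different from the paper's and can be completed, but Step 3 is not yet an argument, and your forecast of what it yields is wrong.

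\textbf{Completing Step 3.} Let $\phi\in H^1(E(L))$ be the class taking the value $1$ on every meridian. The fiber $F\cong P$ is incompressible and cannot be vertical, so it is horizontal and covers the base $P$ with degree $|\phi(h)|$. Hence $\chi(F)=-1$ exactly when
\[
|\phi(h)|=\bigl|\textstyle\sum_K lk(h,K)\bigr|=1,
\]
where $h$ is a regular fiber oriented by the fibration and each component $K$ carries its given orientation.

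In case (a), the fiber slope is a meridian, so $\phi(h)=1$ automatically. Thus every orientation of the key-chain link works, giving $H_\pm\#H_\pm$.

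In case (b), take the $(p,q)$-fibration up to mirror image. For the cores $A,B$ of multiplicities $p,q$ and a regular fiber $R$ one has $lk(h,A)=q$, $lk(h,B)=p$ and $lk(h,R)=pq$. The two configurations then give:
\begin{itemize}
\item[(i)] Two cores plus a regular fiber, with $p,q\ge2$. The condition is $\pm q\pm p\pm pq=\pm1$, which forces $\{p,q\}=\{2,3\}$ with $A,B$ oriented against $R$. This is $L^{\sf ex}$ (or its mirror): a trefoil together with the two cores of the $(2,3)$-fibration.
\item[(ii)] The exceptional core of a $(p,1)$-fibration plus two regular fibers. The condition is $\pm1+p(\varepsilon_1+\varepsilon_2)=\pm1$, which holds for \emph{every} $p$ once the two regular fibers are oppositely oriented. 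This is the pretzel family, with $|n|=p$ equal to the linking number of the two regular fibers. Here $p=1$ is the Hopf case, and $n=0$ is case (a).
\end{itemize}
So the multiplicity is not bounded in the one-exceptional-fiber case: it \emph{is} the free parameter. There is no independent freedom in ``how the regular fibers link'' inside a fixed fibration, since any two link exactly $pq$ times. The Seifert-matrix constraint is not needed.

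\textbf{Comparison with the paper.} The paper never looks at the exterior. It closes up the open book and computes
\[
\pi_1\bigl(M_{(S,T_1^aT_2^bT_3^c)}\bigr)=\langle x,y,z\mid z^a=x^b=y^c,\ xyz=1\rangle .
\]
The surjection onto a triangle group forces $|a|\le1$, and the case $a=\pm1$ reduces to $bc-b-c=\pm1$. It then realizes the surviving monodromies by Stallings twists, identifying the case $(-1,2,3)$ with $L^{\sf ex}$ somewhat informally.

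The two proofs see the same Seifert structure from opposite sides. For $abc\neq0$, $M_{(S,\phi)}$ is Seifert fibered with the binding components as fibers of multiplicities $|a|,|b|,|c|$. Under this dictionary:
\begin{itemize}
\item[(1)] The triangle-group step is the group-theoretic form of ``$S^3$ has no Seifert fibration with three exceptional fibers.''
\item[(2)] $a=0$ is your case (a).
\item[(3)] $bc-b-c=\pm1$ is your linking-number equation; its solutions $(1,n)$ and $(2,3)$ are exactly your two configurations.
\end{itemize}

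The paper's route uses only elementary group theory but needs a separate existence-and-identification step. Yours imports Seifert's classification of fibrations of $S^3$ through Burde--Murasugi. In return, existence and identification come for free, since each link appears as an explicit union of Seifert fibers.
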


\begin{figure}[htbp]
\begin{center}
\includegraphics*[width=100mm]{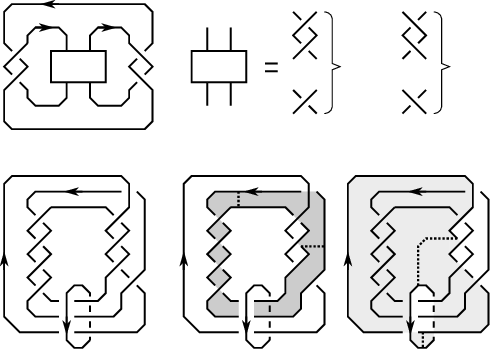}
\begin{picture}(0,0)
\put(-305,200){(i)}
\put(-305,100){(ii)}
\put(-247,162){$2n$}
\put(-167,162){$2n$}
\put(-87,162){$2n$}
\put(-124,122){$(n>0)$}
\put(-112,155){$\vdots$}
\put(-48,155){$\vdots$}
\put(-60,122){$(n<0)$}
\put(-74,138){\LARGE ,}
\put(-22,162){$2|n|$}
\end{picture}
\caption{(i) Pretzel link $P(-2,2n,2)$. (ii) Exceptional link $L^{\sf ex}$ and its fiber surface (illustrated as a union of two annuli) and its three possible attaching arcs.}
\label{fig:genus-zero-fiber}
\end{center}
\end{figure} 

Here by a pretzel link $P(2,2n,-2)$, we mean an oriented link as shown in Figure Figure \ref{fig:genus-zero-fiber} (i). In particular, $P(-2,0,2)=H_+\# H_-$.

Since our result is based on this classification theorem, we attach the proof of theorem \ref{theorem:chi-1-fibered} in Appendix \ref{sec:proof-classification} for reader's convenience and completeness.

\begin{proof}[Proof of Theorem \ref{theorem:main}]

By Lemma \ref{lemma:key-observation}, the knot $K$ is obtained by plumbing two Hopf bands along disjoint attaching arcs.
 
Since the result of Hopf plumbing is a knot, each attaching arc must connect two different components of $K_{o,o}=\partial S_{o,o}$. Up to isotopy, there are only three such arcs for three-holed sphere (see Figure \ref{fig:genus-zero-fiber} and Figure \ref{fig:pants-arc}). Furthermore two attaching arcs cannot be parallel because otherwise the plumbings yield a link.

\begin{figure}[htbp]
\begin{center}
\includegraphics*[width=40mm]{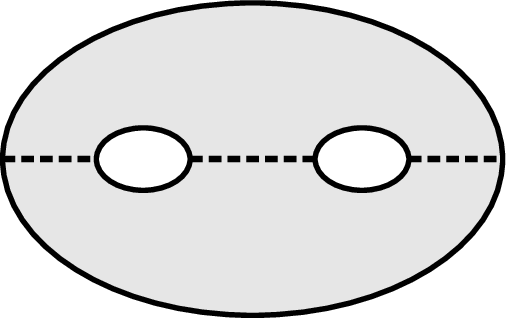}
\begin{picture}(0,0)
\end{picture}
\caption{Attaching arcs of Hopf plumbing for three-holed sphere} 
\label{fig:pants-arc}
\end{center}
\end{figure} 

This means that, each link $K_{o,o}$ in Theorem \ref{theorem:chi-1-fibered} can produce at most $2^{2}\times  \binom{3}{2}=12$ ($=$ (choice of signs of Hopf bands) $\times$ (choice of attaching arcs)) genus two, clasp number two fibered knots.

It remains to specify the knots that appears by Hopf plubmings on the genus zero 3-component fibered links $K_{o,o}$ in  Theorem \ref{theorem:chi-1-fibered}. By taking mirror image, we can assume that $K_{o,o}$ is either $H_{+}\# H_{+}$, $P(2,2n,-2)$, or, $L^{\sf ex}$.

It is convenient to use rational tangles and Montesinos knots to express the result for the case $H_{+} \# H_+$ or $P(2,2n,-2)$. For the conventions and notations of rational tangle and Montesions knot, see Appendix \ref{section:rational_tangle}.
If a positive or negative Hopf bands is attached to the portion of the $2m$-twisted bands of the pretzel knot (i.e., rational tangle $\frac{1}{2m}$), it gives rise to a rational tangle
\[ Q\left(\frac{1}{2m \pm \frac{1}{2}} \right) = Q\left(\frac{2}{4m \pm 1} \right)\]

Therefore for the case $K_{o,o}= K_{o,o}=H_{+}\# H_{+} = P(-2,0,-2)=K(-\frac{1}{2},\infty, -\frac{1}{2})$, plumbings produce one of the following knots.
\begin{itemize}
\item $K(-\frac{2}{3}, \infty, -\frac{2}{3}) = $Connected sum of two positive trefoils,
\item $K(-\frac{2}{3}, \infty, -\frac{2}{5}) = $Connected sum of the positive trefoil and the figure-eight knot.
\item  $K(-\frac{2}{3}, \infty, -\frac{2}{5}) = $Connected sum of two figure-eight knots.
\item $K(-\frac{2}{3}, \pm 2, \frac{1}{2}) = 6_2 , 6_3$
\item $K(-\frac{2}{5}, \pm 2, \frac{1}{2}) = 7_7,\overline{7_6}$
\item $K(-\frac{1}{2}, \pm 2, \frac{2}{3}) = \overline{6_2}, \overline{6_3}$
\item $K(-\frac{1}{2}, \pm 2, \frac{2}{5}) = \overline{7_7},7_6$
\end{itemize}
where $\overline{K}$ means the mirror image of $K$.

Similarly, for the case $K_{o,o}=P(2,2n,-2)$, plumbings produce one of the following knots.

\begin{itemize}
\item  $K(\frac{1}{2},\frac{2}{4n \pm 1}, -\frac{2}{3})$
\item $K(\frac{1}{2},\frac{2}{4n \pm 1},-\frac{2}{5})$
\item $K(\frac{2}{3},\frac{1}{2n}, -\frac{2}{3})$
\item  $K(\frac{2}{3},\frac{1}{2n},-\frac{2}{5})$
\item  $K(\frac{2}{5},\frac{1}{2n}, -\frac{2}{3})$
\item $K(\frac{2}{5},\frac{1}{2n},-\frac{2}{5})$
\item  $K(\frac{2}{3},\frac{2}{4n \pm 1}, -\frac{1}{2})$
\item $K(\frac{2}{5},\frac{2}{4n \pm 1},-\frac{1}{2})$
\end{itemize}

Finally, for the  case that $K_{o,o}$ is the exceptional link $L^{\sf ex}$, we get  knots $K^{\sf ex}_{i;\varepsilon_1,\varepsilon_2}$ $(i=1,2,3)$ in Figure \ref{fig:exceptional}.
\end{proof}

\appendix
\section{Classification of three component, genus zero fibered links in $S^{3}$}
\label{sec:proof-classification}
We give a proof of Theorem \ref{theorem:chi-1-fibered}. 
The proof given here is almost identical with that in \cite{ro}, except we use the fundamental group to determine the possible monodromies which substantially simplifies the proof.

For the classification, it is convenient to use terminologies of open books. An open book $(S,\phi)$ is a pair consisting of an oriented compact surface $S$ with non-empty boundary and an element $\phi$ of its mapping class group $MCG(S)$.

Let 
\[ M_{\phi} = S\times[0,1] \slash (x,1) \sim (\phi(x),0) \]
be the mapping torus of $\phi$.
Strictly speaking, the $\phi$ in the definition of $M_{\phi}$ should be understood as a homeomorphism representing $\phi$. In the following, by abuse of notation, we do not distinguish an element $\phi$ of mapping class group and a homeomorphism that represents $\phi$.

Let $M_{(S,\phi)}$ be an oriented closed 3-manifold given by
\[ M_{(S,\phi)} = M_{\phi} \cup \left( \bigsqcup_{\# \partial S} S^{1} \times D^{2} \right). \]
Here $\# \partial S$ is the number of connected components of $\partial S$, and each solid torus $S^{1} \times D^{2}$ is attached to a boundary of the mapping torus $M_{\phi}$ so that the $\{p\} \times S^{1} \subset \partial M_{\phi}$ bounds a disk in the attached solid torus $S^{1} \times D^{2}$ for all $p \in \partial S$.
We say that $(S,\phi)$ is an \emph{open book decomposition} of a 3-manifold $M$ if $M=M_{(S,\phi)}$.

We denote the the boundaries of $S$ by $C_1,C_2$ and $C_3$, and we denote by $T_i$ ($i=1,2,3$) the right-handed Dehn twist along $C_i$ (see Figure \ref{fig:pants}). The mapping class group of $S$ is a free abelian group of rank three generated by $T_1, T_2, T_3$ so we put $\phi = T_1^a T_2^b T_3^c$ $(a,b,c \in \Z)$.

Since the role of $C_1,C_2,C_3$ are interchangeable, in the following we assume that $|a| \leq |b| \leq |c|$.

\begin{figure}[htbp]
\begin{center}
\includegraphics*[width=45mm]{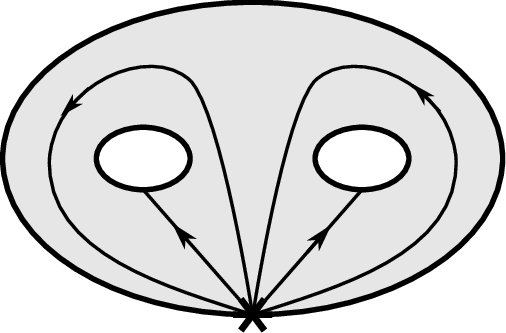}
\begin{picture}(0,0)
\put(-125,5){$C_1$}
\put(-100,55){$C_2$}
\put(-40,55){$C_3$}
\put(-100,25){$\beta$}
\put(-40,25){$\alpha$}
\put(-85,67){$y$}
\put(-55,67){$x$}
\end{picture}
\caption{Three-holed sphere $S$}
\label{fig:pants}
\end{center}
\end{figure} 

Since an open book decomposition of $S^{3}$ corresponds to a fibered link in $S^{3}$, we first classify the open book decomposition $(S,\phi)$ of the 3-sphere whose page $S$ is a genus zero surface with three boundaries.

\begin{proposition}
\label{prop:classification}
Assume that $|a| \leq |b| \leq |c|$. 
Then $\pi_1(M_{(S,(S,T_1^{a}T_2^{b}T_3^{c}))})=\{1\}$ if and only if 
\[ (a,b,c) \in \left\{\begin{array}{l}
(0,-1,-1), (0,-1,1), (0,1,-1), (0,1,1)\\
(-1,1,n),(1,-1,n) \ \quad (n \in \Z), \\
(-1,2,3),(1,-2,-3)
\end{array}\right\}
\]
\end{proposition}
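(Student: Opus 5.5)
The plan is to compute $\pi_1(M_{(S,\phi)})$ explicitly from a presentation. The key fact is that $\phi=T_1^aT_2^bT_3^c$ is a product of boundary-parallel twists, so it is isotopic to the identity if the boundary is not required to stay fixed. Hence the mapping torus $M_\phi$ is homeomorphic to $S\times S^1$, and $M_{(S,\phi)}$ is obtained from $S\times S^1$ by Dehn filling its three boundary tori. Only the filling slopes depend on $(a,b,c)$.

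\emph{Step 1: the presentation.} Write $\pi_1(S\times S^1)=F(x,y)\times\langle h\rangle$, where $x,y$ are loops around $C_3,C_2$ (as in Figure \ref{fig:pants}), $h$ is the $S^1$-fiber, and $(xy)^{-1}$ represents $C_1$. Under the identification $M_\phi\cong S\times S^1$, the meridian $\{p\}\times S^1$ of the solid torus glued along $C_i$ becomes the curve $h\,c_i^{\mp k_i}$, where $c_i$ is the class of $C_i$ and $k_i\in\{a,b,c\}$ is the corresponding exponent. So I get
\[ \pi_1(M_{(S,\phi)})=\langle x,y,h \mid h \text{ central},\ h=x^{c},\ h=y^{b},\ h=(xy)^{-a}\rangle, \]
up to a global sign convention that is harmless because the list of triples is symmetric under $(a,b,c)\mapsto(-a,-b,-c)$. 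In other words, $M$ is the Seifert fibered space over $S^2$ with three fibers of invariants $1/a,1/b,1/c$ (and a fiber with $k_i=0$ means $h$ is killed). Fixing the signs in the slopes is the step that needs care, but only the shape of the relations matters below.

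\emph{Step 2: homology.} Abelianizing gives a $3\times 3$ relation matrix whose determinant is, up to sign, $ab+bc+ca$. So $\pi_1=\{1\}$ forces $ab+bc+ca=\pm1$.

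\emph{Step 3: case analysis under $|a|\le|b|\le|c|$.}
\begin{itemize}
\item[(1)] If $a=0$, the relation $h=(xy)^{0}$ kills $h$, and the group becomes $\mathbb{Z}/b*\mathbb{Z}/c$. This is trivial iff $|b|=|c|=1$, which gives the first row.
\item[(2)] If $|a|=1$, the relation $(xy)^{\mp1}=h$ lets me eliminate $y$ as $x^{-1}h^{\pm1}$. The group is then generated by $x$ and the central element $h$, so it is abelian. Hence it is trivial iff $ab+bc+ca=\pm1$. For $a=1$ this reads $(b+1)(c+1)\in\{0,2\}$.
 \begin{itemize}
 \item[(2a)] The value $0$ gives $b=-1$ with $c$ arbitrary, which is $(1,-1,n)$. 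The alternative $c=-1$ is absorbed by the ordering.
 \item[(2b)] The value $2$ together with $|b|\ge1$ and $|b|\le|c|$ leaves only $(b,c)=(-2,-3)$, which is $(1,-2,-3)$.
 \item[(2c)] The case $a=-1$ is the mirror, giving $(-1,1,n)$ and $(-1,2,3)$.
 \end{itemize}
\item[(3)] If $|a|\ge2$, then $|b|,|c|\ge2$ as well. I kill the central element $h$ to get the quotient $\langle x,y\mid x^{c}=y^{b}=(xy)^{a}=1\rangle$. This is the von Dyck triangle group $D(|a|,|b|,|c|)$ with all exponents at least $2$. Such a group is always nontrivial: it surjects onto a nontrivial finite group in the spherical cases, and it is infinite otherwise. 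So $\pi_1\neq1$ in this case. This also rules out the Poincaré sphere $(2,3,5)$, whose homology vanishes.
\end{itemize}

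Conversely, the listed triples give trivial groups by the computations in cases (1) and (2), which proves the "if" direction. The main obstacle I expect is Step 1: getting the filling slopes $h\,c_i^{\mp k_i}$ right, with consistent orientations and the correct basepoint/central-element bookkeeping. I would verify it on the known case $(0,1,1)$, which should give $H_+\#H_+$ and hence $S^3$.
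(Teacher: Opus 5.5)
Your proof is correct and is essentially the paper's argument. The paper gets $\langle x,y \mid (xy)^a x^b=1,\ (xy)^a y^c=1\rangle$ directly from van Kampen with the arcs $\alpha,\beta$; you get an isomorphic presentation by identifying $M_\phi$ with $S\times S^1$ and Dehn filling. From there both proofs use the same triangle-group quotient to force $|a|\le 1$ and the same free-product argument for $a=0$. For $|a|=1$, your observation that the group is abelian, combined with the determinant $ab+bc+ca$, replaces the paper's explicit reduction to $\langle x \mid x^{bc-b-c}\rangle$.

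One correction to your commentary. The relative sign between the $C_1$-relation and the $C_2,C_3$-relations is not just a matter of shape. Flipping it would turn $(1,-1,n)$ into $(1,1,n)$, whereas $T_1T_2T_3$ actually gives $H_1\cong\Z/3$. Your proposed check on $(0,1,1)$ cannot detect this sign, because $a=0$ there. The uniform convention you wrote down, all $c_i$ boundary-oriented with $c_1=(xy)^{-1}$, is the right one, so the argument stands.
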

\begin{proof}
We take a base point $\ast$ on $C_1$, loops $x,y$, and arcs $\alpha, \beta$ as in Figure \ref{fig:pants} (b).
By van-Kampen theorem 
\[ \pi_1(M_{(S,T_1^{a}T_2^{b}T_3^{c})}) = \langle x,y \: | \:  \overline{\alpha}\cdot \phi(\alpha), \overline{\beta} \cdot \phi(\beta) \rangle \]
Here $\cdot$ represents a concatenation of path and $\overline{\alpha}, \overline{\beta}$ are the reverse path of $\alpha$ and $\beta$, respectively.
Therefore
\[ \pi_1(M_{(S,T_1^{a}T_2^{b}T_3^{c})}) = \langle x,y \: | \: (xy)^ax^{b}=1, (xy)^{a}y^{c} = 1 \rangle\]
By putting $z=(xy)^{-1}$, we get
\[ \pi_1(M_{(S,T_1^{a}T_2^{b}T_3^{c})}) = \langle x,y,z \: | \: z^a=x^b=y^c, xyz=1 \rangle\]
hence $\pi_1(M_{(S,T_1^{a}T_2^{b}T_3^{c})})$ surjects to the triangle group 
\[ \Delta_{a,b,c} = \langle x,y,z \: | \: z^a=x^b=y^c=1, xyz=1 \rangle. \]
Since the triangle group $\Delta_{a,b,c}$ is non-trivial whenever $|a|,|b|,|c|>1$,
 we get $|a|\leq 1$.

If $a=0$, 
\[\pi_1(M_{(S,T_2^{b}T_3^{c})}) =  \langle x,y \: | \: x^{b}=1, y^{c} = 1 \rangle \] hence it is trivial if and only if $|b|=|c|=1$.

If $a=-1$, then
\begin{align*}
\pi_1(M_{(S,T_1^{-1}T_2^{b}T_3^{c})}) 
&= \langle x,y \: | \:  y^{-1}x^{b-1}=1, x^{-1}y^{c-1} = 1\rangle = \langle x\: | \:  x^{bc-b-c} =1 \rangle. 
\end{align*}
Therefore $\pi_1(M_{(S,T_1^{-1}T_2^{b}T_3^{c})})$ is trivial if and only if $bc-b-c = \pm 1$. Since we are assuming $|a| \leq |b| \leq |c|$, we conclude 
$(b,c)=(1,n)$ or $(2,3)$.

The case $a=1$ is similar.
\end{proof}

It remains to specify a fibered link that corresponds to the open books in Proposition \ref{prop:classification}. To construct a fibered link explicitly, we use \emph{Stallings twist} \cite{st} in a slightly general form following Harer \cite{ha}.

\begin{definition}[Stallings twist]
Let $S \subset S^{3}$ be a fiber surface of a fibered link $L$ in $S^{3}$ and let $\phi$ be its monodromy.
Assume that there is an essential simple closed curve on $F$ which is unknotted in $S^{3}$. Let $f$ be the framing of $c$ given by the fiber surface $S$ (i.e., the linking number of $c$ and its push-off along the normal direction of $S$).
Assume that $f+\delta_1 = \delta_2$ where $\delta_1,\delta_2 \in \{\pm 1\}$.
Then the $\delta_2$ Dehn surgery on $c$ gives a fibered link with monodromy $\phi T_c^{\delta_1}$. We call this operation \emph{Stallings twist along $c$}.
\end{definition}

When $f=0$, instead of $\delta_2 = \pm 1$ surgeries, we can apply $\frac{1}{n}$ surgery to get a fibered link with monodromy $\phi T_{c}^{-n}$. This can be understood as an iteration of the Stallings twists along the same curve $c$ on a fiber, since the framing $f=0$  means that after Stallings twist, the curve $c$ remains to have framing zero with respect to the new fiber surface.

\begin{proof}[Proof of Theorem \ref{theorem:chi-1-fibered}]
In the following, we give a fibered link with modromies in Proposition \ref{prop:classification} under appropriate numbering of its boundary components.

\begin{itemize}
\item[(i)] $T_2^{\pm 1}T_3^{\pm 1}$\\

This is the monodromy of the connected sum of positive or negative Hopf links $H_{\pm} \# H_{\pm}$.\\

\item[(ii)] $T_{1}T_2^{-1}T_{3}^{n}$\\

This is obtained by applying Stallings twists for the fiber surface $S$ of $H_+\# H_- = P(2,0,-2)$, along the curve $c$ depicted in Figure \ref{fig:s-twist} (i), namely, by applying $\frac{1}{n}$ surgery on $c$. The result is the pretzel link $P(2,-2n,-2)$, \\

\item[(iii)] $T_{1}^{-1}T_2^{2}T_3^3$\\

This is obtained by applying Stallings twist for the fiber surface $S$ of $P(2,-6,-2)$, along the curve $c$ depicted in Figure \ref{fig:s-twist} (ii). The result is the exceptional link $L^{\sf ex}$.

\begin{figure}[htbp]
\begin{center}
\includegraphics*[width=95mm]{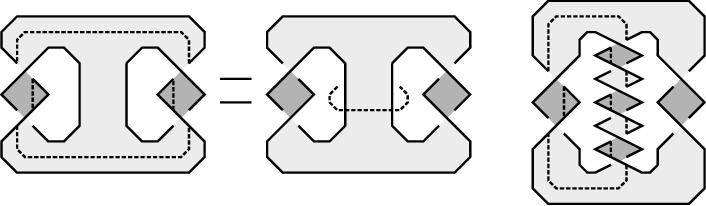}
\begin{picture}(0,0)
\put(-285,80){(i)}
\put(-238,57){$c$}
\put(-150,48){$c$}
\put(-85,80){(ii)}
\put(-60,58){$c$}
\end{picture}
\caption{(i) Stalling twists along a curve $c$ on a fiber of $H_+ \# H_- = P(2,0,-2)$. (ii) Stalling twist that yields exceptional link $L_*$.}
\label{fig:s-twist}
\end{center}
\end{figure} 

Although it is cumbersome to draw the result of Stalling twist in the last case, there are several ways to check this, once a diagram of $L^{\sf ex}$ is given. By computing the HOMFLY polynomials, we can  easily check that $L^{\sf ex}$ is not equal to $P(2,2n,-2)$ or $H_{\pm} \# H_{\pm}$. Thus by Proposition \ref{prop:classification}, to confirm that the result of the Stalling twist coincides with $L^{\sf ex}$ (or its mirror image), it is sufficient to check that the link $L^{\sf ex}$ is indeed a genus zero fibered link. This can be checked by several ways.
\end{itemize}

The rest of the cases are obtained by taking mirror image.

\end{proof}

\section{Rational tangle and Montesinos knot}
\label{section:rational_tangle}

We review the definition of rational tangle and Montsinos knots to summarize the notation and convention. For details, we refer to \cite[Section 12]{bz}.

For an integer $n$, we define the tangles $[n],[\frac{1}{n}]$ and $[\infty]=[\frac{1}{0}]$ as in Figure \ref{fig:rational_tangle} (i).
The tangle sum $A+B$ is a tangle obtained by connecting the northeast endpoint of $A$ and northwest endpoint of $B$, and, the southeast endpoint of $A$ and southwest endpoint of $B$. Similarly, the tangle product $A\ast B$ is a tangle obtained by connecting the southwest endpoint of $A$ and northwest endpoint of $B$, and, the southeast endpoint of $A$ and northwest endpoint of $B$. The closure of the tangle $A$ is the link obtained by connecting its northeast and northwest endpoints, southeast and southwest endpoints (see Figure\ref{fig:rational_tangle} (ii)).

For  a rational number $\frac{p}{q}$, take its continued fraction expression
\[ \frac{p}{q}= a_n + \cfrac{1}{a_{n-1} + \cfrac{1}{a_{n-2} + \cdots +\cfrac{1}{a_2+\cfrac{1}{a_1}}}}  \]
The rational tangle $Q(\frac{p}{q})$ is the tangle given by 
\[ Q\left(\frac{p}{q} \right) = \begin{cases}
\left( \cdots \left(\left(\left( \left[a_1 \right] \ast \left[\frac{1}{a_2} \right] \right) + [a_3 ]\right) \ast \cdots  \right)\ast\left[\frac{1}{a_{n-1}}\right]  \right) + [a_{n}] & (n:\mbox{odd}) \\
\left( \cdots \left(\left(\left( \left[\frac{1}{a_1} \right] + \left[a_2 \right] \right) \ast \left[\frac{1}{a_3} \right]\right)+  \cdots  \right)\ast\left[\frac{1}{a_{n-1}}\right]  \right) + [a_{n}] & (n:\mbox{even})
\end{cases}\]
(see Figure \ref{fig:rational_tangle} (iii)).

The isotopy class of the tangle $Q(\frac{p}{q})$ does not depend on a choice of a continued fraction.

\begin{definition}[(Length three) Montesinos knot]
The (length three) Montesinos knot $K(\frac{p_1}{q_1},\frac{p_2}{q_2},\frac{p_3}{q_3})$ is the closure of the tangle $Q(\frac{p_1}{q_1})+Q(\frac{p_2}{q_2})+Q(\frac{p_3}{q_3})$.
\end{definition}

For each permutation $\sigma$ of $\{1,2,3\}$, 
\[ K\left(\frac{p_1}{q_1},\frac{p_2}{q_2},\frac{p_3}{q_3}\right) = K\left(\frac{p_{\sigma(1)}}{q_{\sigma(1)}},\frac{p_{\sigma(2)}}{q_{\sigma(2)}},\frac{p_{\sigma(3)}}{q_{\sigma(3)}}\right).\]  
Furthermore,  for every $n \in \Z$. 
\[ K\left(\frac{p_1}{q_1},\frac{p_2}{q_2},\frac{p_3}{q_3}\right) = K\left(\frac{p_1}{q_1}+n,\frac{p_2}{q_2}-n,\frac{p_3}{q_3}\right). \]
It is known that $K(\frac{p_1}{q_1},\frac{p_2}{q_2},\frac{p_3}{q_3})= K(\frac{p'_1}{q'_1},\frac{p'_2}{q'_2},\frac{p'_3}{q'_3})$ if and only if they are related by these two operations. Namely,
\[ \left\{\frac{p_1}{q_1},\frac{p_2}{q_2},\frac{p_3}{q_3}\right\} = \left\{\frac{p'_1}{q'_1},\frac{p'_2}{q'_2},\frac{p'_3}{q'_3}\right\} \pmod 1 \ \mbox{ and } \frac{p_1}{q_1}+\frac{p_2}{q_2}+\frac{p_3}{q_3} = \frac{p'_1}{q'_1}+\frac{p'_2}{q'_2}+\frac{p'_3}{q'_3}.\]

\begin{figure}[htbp]
\begin{center}
\includegraphics*[width=100mm]{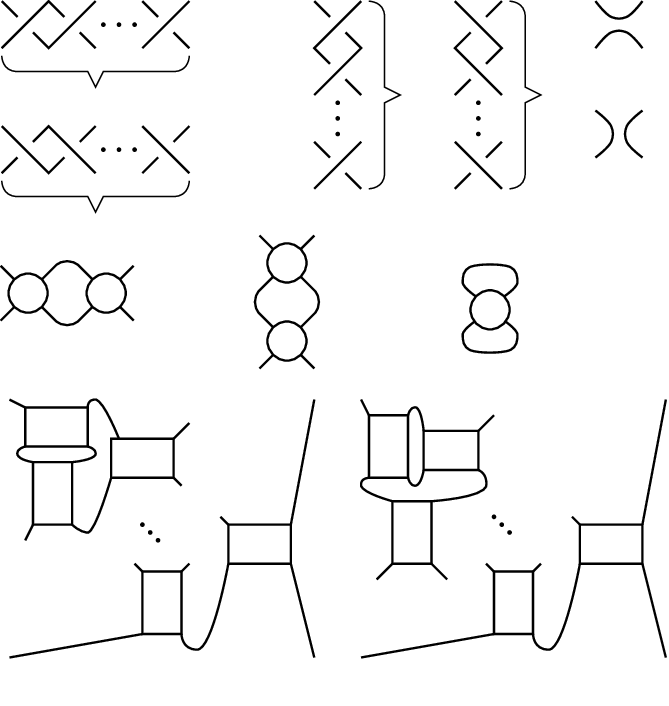}
\begin{picture}(0,0)
\put(-305,300){(i)}
\put(-305,190){(ii)}
\put(-305,125){(ii)}
\put(-200,285){\LARGE $[n]$}
\put(-200,235){\LARGE $[n]$}
\put(-235,255){$(n>0)$}
\put(-250,255){$n$}
\put(-235,200){$(n<0)$}
\put(-252,200){$|n|$}
\put(-118,295){\LARGE $[\frac{1}{n}]$}
\put(-62,295){\LARGE $[\frac{1}{n}]$}
\put(-116,257){$n$}
\put(-56,257){$|n|$}
\put(-280,172){$P$}
\put(-247,172){$Q$}
\put(-230,150){$P+Q$}
\put(-170,184){$P$}
\put(-170,151){$Q$}
\put(-140,150){$P \ast Q$}
\put(-83,164){$P$}
\put(-60,150){Closure of $P$}
\put(-5,285){\LARGE $[0]$}
\put(-5,238){\LARGE $[\frac{1}{0}]$}
\put(-150,206){$(n>0)$}
\put(-92,206){$(n<0)$}
\put(-270,115){$a_1$}
\put(-268,95){\rotatebox{270}{$a_2$}}
\put(-232,103){$a_3$}
\put(-222,54){\rotatebox{270}{$a_{n-1}$}}
\put(-182,66){$a_n$}
\put(-128,113){\rotatebox{270}{$a_1$}}
\put(-103,106){$a_2$}
\put(-114,78){\rotatebox{270}{$a_3$}}
\put(-75,54){\rotatebox{270}{$a_{n-1}$}}
\put(-36,66){$a_n$}
\put(-240,10){($n$: odd)}
\put(-80,10){($n$: even)}
\end{picture}
\caption{Rational tangle. (i) Definition of tangles $[n]$, $[\frac{1}{n}]$, $o, [\infty]=[\frac{1}{0}]$. (ii) Tangles $P+Q$, $P\ast Q$ and the closure of $P$. (iii) Rational tangle $Q(\frac{p}{q})$}
\label{fig:rational_tangle}
\end{center}
\end{figure}

\end{document}